\theoremstyle{plain}
\newtheorem{thm}{Theorem}[section] 
\theoremstyle{definition}
\theoremstyle{plain}
\theoremstyle{remark}
\newtheorem{rem}[thm]{\protect\remarkname}
\theoremstyle{plain}
\newtheorem{lem}[thm]{\protect\lemmaname}
\theoremstyle{plain}
\newtheorem{cor}[thm]{\protect\corollaryname}
\theoremstyle{plain}
\newtheorem{ex}[thm]{\protect\examplename}
\renewcommand{\1}{\mathbbm{1}}
\renewcommand{\tilde}{\widetilde}
\renewcommand{\phi}{\varphi}
\renewcommand{\bar}{\overline}
  \providecommand{\corollaryname}{Corollary}
  \providecommand{\definitionname}{Definition}
  \providecommand{\lemmaname}{Lemma}
  \providecommand{\remarkname}{Remark}
  \providecommand{\examplename}{Example}
\begin{document}
\title[Strong laws of large numbers for intermediate trimming]{ Strong laws of large numbers for intermediately trimmed sums of
i.i.d.\ random variables with infinite mean}

\author[Kesseb\"ohmer]{Marc Kesseb\"ohmer}
  \address{Universit\"at Bremen, Fachbereich 3 -- Mathematik und Informatik, Bibliothekstr. 1, 28359 Bremen, Germany}
  \email{\href{mailto:mhk@math.uni-bremen.de}{mhk@math.uni-bremen.de}}
\author[Schindler]{Tanja Schindler}
\address{Australian National University, College of Business and Economics, 
Acton ACT 2601, Australia}
  \email{\href{mailto:tanja.schindler@anu.edu.au}{tanja.schindler@anu.edu.au}}

\keywords{Almost sure convergence theorems, trimmed sum process}
 \subjclass[2010]{
    Primary: 60F15
    Secondary: 60G50, 62G70}
\date{\today}

\thanks{This research was supported  by the German Research Foundation (DFG)
grant \emph{Renewal Theory and Statistics of Rare Events in Infinite
Ergodic Theory} (Geschäftszeichen KE 1440/2-1). TS was supported by
the Studienstiftung des Deutschen Volkes.}

\begin{abstract}
{ We show that for every sequence of  non-ne\-ga\-tive i.i.d.\ random
variables with infinite mean there exists
a proper moderate trimming such that for the trimmed sum process a non-trivial
strong law of large numbers holds. We provide an explicit procedure  to find  a moderate trimming sequence 
even if the underlying distribution function has a complicated structure, e.g. has no regularly varying tail distribution.}
\keywords{almost sure convergence theorem \and moderately trimmed sum \and strong law of large numbers }
\subjclass{60F15 \and 60G50 \and 60G70}
\end{abstract}
\maketitle

\section{Introduction and statement of main results}\label{sec: introduction}
Throughout the paper, let $\left(X_{n}\right)_{n\in\mathbb{N}}$ denote
a sequence of non-negative, independent and identically distributed
(i.i.d) random variables with distribution function $F:\mathbb{R}\to\left[0,1\right]$,
$x\mapsto\mathbb{P}\left(X_{1}\leq x\right)$ and define the sum process
$S_{n}\coloneqq\sum_{k=1}^{n}X_{k}$ for $n\geq1$ and $S_{0}\coloneqq0$.
We say $\left(X_{n}\right)_{n\in\mathbb{N}}$ fulfills a strong law
of large numbers (with normalizing sequence $\left(d_{n}\right)$)
if $\lim S_{n}/d_{n}=1$ almost everywhere (a.e.). In contrast to
the case of finite expectation, if $\mathbb{E}\left(X_{1}\right)=\infty$
then $\left(X_{n}\right)$ fulfills no strong law of large numbers.
In fact, Aaronson showed in \cite{aaronson_ergodic_1977} that for
all positive sequences $\left(d_{n}\right)_{n\in\mathbb{N}}$ we have
almost everywhere that 
\[
\limsup_{n\rightarrow\infty}\frac{S_{n}}{d_{n}}=+\infty\text{ \,\,\,\,\ or \,\,\,\,\,}\liminf_{n\rightarrow\infty}\frac{S_{n}}{d_{n}}=0.
\]
However, if there is a sequence of constants $\left(d_{n}\right)_{n\in\mathbb{N}}$
such that $\lim_{n\rightarrow\infty}S_{n}/d_{n}=1$ in pro\-ba\-bi\-li\-ty,
then there might be a strong law of large numbers after deleting finitely
many of the largest summands from the partial $n$-sums. More precisely,
for each $n\in\mathbb{N}$ we choose a permutation $\sigma$ from the symmetric group  $\mathcal{S}_{n}$ acting on $\{1,\ldots,n\}$
  such that  $X_{\sigma\left(1\right)}\geq X_{\sigma\left(2\right)}\geq\ldots\geq X_{\sigma\left(n\right)}$. For given $b_{n}\in\mathbb{N}_{0}$ we then set
\begin{align}
S_{n}^{b_{n}} & \coloneqq\sum_{k=b_n+1}^{n}X_{\sigma\left(k\right)}.\label{Snf(n)}
\end{align}
If $b_{n}=r\in\mathbb{N}$ is fixed for all $n\in\mathbb{N}$ then $\left(S_{n}^{r}\right)$
is called a \emph{lightly trimmed sum} \emph{process}.

For an example of this situation we consider the unique continued
fraction expansion of an irrational $x\in\left[0,1\right]$ given by
\[
x\coloneqq[a_{1}\left(x\right),a_{2}\left(x\right),\ldots]\coloneqq\frac{1}{a_{1}\left(x\right)+\cfrac{1}{a_{2}\left(x\right)+\ddots}}
\]
 Then $X_{n}\coloneqq a_{n}$, $n\in\mathbb{N}$, defines almost everywhere
a stationary (dependent, but $\psi$-mixing) process with respect
to the Gauss measure $\mathrm{d}\mathbb{P}(x)\coloneqq1/\left(\log2\left(1+x\right)\right)\mathrm{d}\lambda\left(x\right)$,
where $\lambda$ denotes the Lebesgue measure restricted to $\left[0,1\right]$.
Khinchin showed in \cite{khintchine_metrische_1935} that for the
normalized sum of the continued fraction digits we have $\lim_{n\rightarrow\infty}S_{n}/\left(n\log n\right)=1/\log2$
in probability. Even though a strong law of large numbers can not
hold for $S_{n}$, Diamond and Vaaler showed in \cite{diamond_estimates_1986}
that under light trimming with $r=1$ we have Lebesgue almost everywhere
\[
\lim_{n\rightarrow\infty}\frac{S_{n}^{1}}{n\log n}=\frac{1}{\log2}.
\]
 We refer to this as a {\em lightly trimmed strong law}.
  Mori provided in \cite{mori_strong_1976}, \cite{mori_stability_1977}
for i.i.d.\ random variables general conditions 
on the distribution function for a lightly trimmed strong law to hold.
These results have been
generalized by Kesten and Maller, see \cite{maller_relative_1984},
\cite{kesten_ratios_1992}, and \cite{kesten_effect_1995}, see also
\cite{einmahl_relationship_1988} and \cite{csoergoe_strong_1996} for further results. 
Aaronson
and Nakada extended the results of Mori to $\psi$-mixing random variables
in \cite{aaronson_trimmed_2003}.

The above results show that for certain classes of distribution functions
we can obtain almost sure limit theorems under  light trimming. 
However, { a theorem by Kesten in \cite{kesten_convergence_1993} states that for any fixed $r\in\mathbb{N}$ and sequences  $\left(a_{n}\right)_{n\in\mathbb{N}}$
and $\left(d_{n}\right)_{n\in\mathbb{N}}$ with $d_{n}\rightarrow\infty$ the convergence in distribution of $\left(S_{n}-a_{n}\right)/d_{n}$ is equivalent to the convergence in distribution of
$\left(S_{n}^{r}-a_{n}\right)/d_{n}$.}
Hence, this theorem shows that a weak law of large numbers for $S_{n}$ is
ne\-ces\-sa\-ry for a lightly trimmed strong law.
 Combining   two theorems by Feller { \cite[VII.7 Theorem 2]{feller_introduction_1971} and \cite[VIII.9 Theorem 1]{feller_introduction_1971}} shows that 
for functions with a distribution function with regularly
varying tails with exponent larger than $-1$, i.e.\ $1-F\left(x\right)\sim x^{-\alpha}L\left(x\right)$
with $0<\alpha<1$ and $L$ a slowly varying function, there is no weak law of large numbers and hence there is no lightly trimmed strong law.
Inhere, $u\left(x\right)\sim w\left(x\right)$ means that $u$ is
asymptotic to $w$ at infinity, that is $\lim_{x\rightarrow\infty}u\left(x\right)/w\left(x\right)=1$
and $L$ being \emph{slowly varying} means that for every $c>0$ we
have $L\left(cx\right)\sim L\left(x\right)$. 
In particular this shows the need of a stronger version of trimming than light trimming: Instead of considering the trimming by a constant $b_{n}=r$
in \eqref{Snf(n)} we allow for a sequence $\left(b_{n}\right)\in\mathbb{N}^{\mathbb{N}}$
diverging to infinity such that and $b_{n}=o\left(n\right)$, i.e.\ $\lim_{n\rightarrow\infty}b_{n}/n=0$, and
we then consider the \emph{intermediately} (also called \emph{moderately})  \emph{trimmed sum
process} $\left(S_{n}^{b_{n}}\right)$.

The case of regularly varying tails is treated by Haeusler and Mason in \cite{haeusler_laws_1987} and Haeusler in \cite{haeusler_nonstandard_1993}, in which a law of  an iterated logarithm is established.
With $F^{\leftarrow}:\left[0,1\right]\to\mathbb{R}_{\geq 0}$ denoting the  \emph{generalized inverse
function} of $F$, i.e.\ 
$F^{\leftarrow}\left(y\right) \coloneqq\inf\left\{ x\in\mathbb{R}\colon F\left(x\right)\geq y\right\}$,
they proved that 
\begin{align*}
\limsup_{n\to\infty} \pm \frac{S_n^{b_n}-n\cdot \int_0^{1-b_n/n}F^{\leftarrow}\left(s\right)\mathrm{d}s}{\gamma\left(n,b_n\right)}
\end{align*}
almost surely equals $1$ if $\lim_{n\to\infty}b_n/\log\log n=\infty$, see \cite{haeusler_laws_1987}, and, if $b_n\sim c\cdot \log\log n$, almost surely equals a constant $M$,  see \cite{haeusler_nonstandard_1993}.
By comparing the asymptotic behavior of the norming and centering sequences $\gamma\left(n,b_n\right)$ and $n\cdot\int_0^{1-b_n/n}F^{\leftarrow}\left(s\right)\mathrm{d}s$ referring to \cite[Section 4]{haeusler_nonstandard_1993} one can conclude that $\lim_{n\to\infty}S_n^{b_n}/(n\cdot \int_0^{1-b_n/n}F^{\leftarrow}\left(s\right)\mathrm{d}s)=1$ almost surely
if and only if $\lim_{n\to\infty}b_n/\log\log n=\infty$. We refer to this behavior, i.e.\ the existence of a sequence $\left(d_n\right)$ such that $\lim_{n\to\infty}S_n^{b_n}/d_n=1$ almost surely, in the following as an \emph{intermediately trimmed strong law}.

Even though for regularly varying tail distributions  an intermediately trimmed strong law can be derived from the above results, there is  little known for general distribution functions.
For slowly varying tail distributions Haeusler and Mason provided in \cite{haeuler_asymptotic_1991}  a condition  depending on the distribution of the $b_n$-th maximal term  for an intermediately trimmed strong law of large numbers  to hold and  gave some illuminating additional examples. In this context see also \cite{MR1117267}, \cite{MR2085979}, and for complementary results concerning the largest summands we refer to \cite{mason_laws_1982}.

Our main theorem provides us with an explicit method to find such a trimming
sequence $\left(b_{n}\right)$. Further, in Remark \ref{ESnbn not ex} we show that the normalizing sequence $\left(d_{n}\right)_{n\in\mathbb{N}}$ 
is not necessarily asymptotic to the sequence of expectations $\left(\mathbb{E}\left(S_{n}^{b_{n}}\right)\right)_{n\in\mathbb{N}}$.

Finally, we get as a corollary that for non-negative random variables an intermediately trimmed strong law can be established even in a more general setting if we desist from constructing the normalizing sequence explicitly from $F$, see Corollary \ref{cor: thm A}. 
However, we would like to point out that this corollary could also be established differently using a quantile approach similar to the methods used by Haeusler in \cite{haeuler_asymptotic_1991}.

Following common notation we define the   ceiling of  $x\in \mathbb{R}$  as $\left\lceil x\right\rceil \coloneqq\min\left\{ n\in\mathbb{N}\colon n\geq x\right\} $ and  the floor of $x$ as   $\left\lfloor x\right\rfloor \coloneqq\max\left\{ n\in\mathbb{N}\colon n\leq x\right\} $.

\begin{thm}
\label{find bn} 
Let $\left(X_n\right)$ be sequence of non-negative random variables with infinite mean and distribution function $F$.
Further let $\left(t_{n}\right)_{n\in\mathbb{N}}$ be a sequence
of positive real numbers tending to infinity such that $F^{\leftarrow}\left(F\left(t_{n}\right)\right)=t_{n}$, for all $n\in\mathbb{N}$.
Fix $0<\epsilon<1/4$ 
such that for 
\[
a_{n}\coloneqq n\cdot\left(1-F\left(t_{n}\right)\right),\;\; d_{n}\coloneqq n\int_{0}^{t_{n}}x\,\mathrm{d}F\left(x\right),\,\, n\in\mathbb{N},
\]
we have 
\begin{align}
\lim_{n\to\infty}t_{n}/d_{n}\cdot\max\left\{a_{n}^{1/2+\epsilon}\left(\log\log n\right)^{1/2-\epsilon},\log n\right\}& =0.\label{cond 1 find bn}
\end{align}
Then with $b_{n}\coloneqq\left\lceil a_{n}+9\max\left\{a_n^{1/2+\epsilon}\cdot\left(\log\log n\right)^{1/2-\epsilon},\log\log n\right\}\right\rceil $,
$n\in\mathbb{N}$, we have 
\[
\lim_{n\rightarrow\infty}\frac{S_{n}^{b_{n}}}{d_{n}}=1\text{ a.s.}
\]
\end{thm}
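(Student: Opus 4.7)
The plan is to compare $S_n^{b_n}$ to the truncated sum $T_n \coloneqq \sum_{k=1}^{n} X_k \1_{\{X_k \leq t_n\}}$, whose expectation is exactly $d_n$. Setting $Y_k^{(n)} \coloneqq X_k \1_{\{X_k \leq t_n\}}$ and $N_n \coloneqq \#\{k \leq n \colon X_k > t_n\}$ (a binomial variable with mean $a_n$), the key observation is that on the event $\{N_n \leq b_n\}$ the trimming first removes the $N_n$ summands exceeding $t_n$ and then the top $b_n - N_n$ of the truncated ones, so
\[
S_n^{b_n} = T_n - R_n, \qquad 0 \leq R_n \leq (b_n - N_n)\, t_n.
\]
The proof then reduces to three tasks: (i) $T_n/d_n \to 1$ almost surely; (ii) $N_n \leq b_n$ eventually almost surely; and (iii) $R_n/d_n \to 0$ almost surely.

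For (i), Bernstein's inequality applied to the bounded i.i.d.\ centered summands $Y_k^{(n)} - \mathbb{E}(Y_k^{(n)})$, together with the elementary bound $\mathbb{E}((Y_1^{(n)})^2) \leq t_n\, \mathbb{E}(Y_1^{(n)}) = t_n d_n/n$ (hence $\mathrm{Var}(T_n) \leq t_n d_n$), yields
\[
\mathbb{P}(|T_n - d_n| > \delta d_n) \leq 2 \exp\!\left(-\frac{\delta^2 d_n}{2 t_n (1+\delta/3)}\right).
\]
Hypothesis \eqref{cond 1 find bn} forces $d_n/(t_n \log n) \to \infty$, so this probability decays faster than any negative power of $n$ and Borel--Cantelli applies directly. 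For (ii) and (iii), Bernstein applied to the binomial $N_n$ (variance at most $a_n$) with the correction $C_n \coloneqq 9\max\{a_n^{1/2+\epsilon}(\log\log n)^{1/2-\epsilon},\log\log n\}$ gives, after a short case analysis distinguishing the regimes $a_n \gtrsim \log\log n$ and $a_n \lesssim \log\log n$, a bound of the form $\mathbb{P}(|N_n - a_n| > C_n) \leq (\log n)^{-\kappa}$ with some $\kappa > 1$. This is not summable in $n$, but it is summable along the dyadic subsequence $n_j \coloneqq 2^j$, so along this subsequence $N_{n_j} \leq b_{n_j}$ and $b_{n_j} - N_{n_j} \leq 2 C_{n_j} + 1$ hold eventually almost surely, whence $R_{n_j}/d_{n_j} \leq (2 C_{n_j}+1)\, t_{n_j}/d_{n_j} \to 0$ again by \eqref{cond 1 find bn}.

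The main obstacle is bridging from the subsequence $(n_j)$ back to every $n$: since both the trimming level $b_n$ and the truncation threshold $t_n$ depend on $n$, $(S_n^{b_n})$ is not monotone and cannot be sandwiched straightforwardly between $S_{n_j}^{b_{n_j}}$ and $S_{n_{j+1}}^{b_{n_{j+1}}}$. I would handle this by first reducing, without loss of generality, to the case when $(t_n)$ is non-decreasing (justified because $t_n \to \infty$), and then for $n \in [n_j, n_{j+1})$ dominating the maximal count $\max_{n_j \leq n < n_{j+1}} \#\{k \leq n \colon X_k > t_n\}$ by the single binomial $\#\{k \leq n_{j+1} \colon X_k > t_{n_j}\}$ of mean at most $2 a_{n_j}$, to which the same Bernstein bound applies. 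Combined with the slow variation of $d_n$ across dyadic blocks (implicit in \eqref{cond 1 find bn}), this transfers the almost sure conclusion from the subsequence to every $n$. The exponent $1/2+\epsilon$ (strictly above the central-limit scale) and the constant $9$ in the definition of $b_n$ are tuned precisely so that the Bernstein estimate on $N_n$ is summable along $(n_j)$ with enough margin to absorb the bridging losses.
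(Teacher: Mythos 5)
Your overall strategy matches the paper's: compare $S_n^{b_n}$ to the truncated sum $T_n$ whose expectation is $d_n$, use Bernstein's inequality to get $T_n/d_n\to 1$ a.s.\ (this is exactly the paper's Theorem \ref{Thm: Sn* allg}), and then control the exceedance count $N_n=\#\{k\leq n\colon X_k>t_n\}$ to bound $S_n^{b_n}-T_n$. Step (i) is correct. The genuine difficulty is the uniform-in-$n$ concentration of $N_n$ around $a_n$ within the window $\pm C_n$, and here your bridging argument has gaps that I don't see how to close along the lines you sketch.

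First, the reduction ``without loss of generality $(t_n)$ is non-decreasing'' is not justified. The hypothesis only requires $t_n\to\infty$ and $F^{\leftarrow}(F(t_n))=t_n$; replacing $(t_n)$ by, say, $\tilde t_n=\max_{k\le n}t_k$ changes $a_n$, $b_n$, and $d_n$, so you would be proving a different statement. Second, and more seriously, even granting monotonicity your domination of $\max_{n_j\le n<n_{j+1}}N_n$ by $M_j\coloneqq\#\{k\le n_{j+1}\colon X_k>t_{n_j}\}$ loses a factor of $2$ in the mean: $\mathbb{E}(M_j)\approx 2a_{n_j}$, while the threshold you must beat is $b_n=a_n+C_n$ with $C_n=o(a_n)$ (since $\epsilon<1/4$ gives $\tfrac12+\epsilon<1$). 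When $t_n$ is flat over the block (so $a_n\approx a_{n_j}$), $2a_{n_j}$ vastly exceeds $a_{n_j}+C_{n_j}$, so the coarse domination cannot establish $N_n\le b_n$, let alone $b_n-N_n=O(C_n)$. There is no room to ``absorb the bridging losses'' in the constant $9$; the loss is at the wrong order of magnitude. Third, ``slow variation of $d_n$ across dyadic blocks'' is not implicit in \eqref{cond 1 find bn} and is not needed if you have the correct bound, but it does not rescue the estimate above.

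What the paper does instead (Lemma \ref{Philipp}, used in the proof of the intermediate Theorem \ref{Sbn}) is precisely to repair this. All the indicators $\1_{\{X_k>t_n\}}$ (over both $k$ and $n$) are represented on the single product space $\bigotimes[0,1]$ as $A_{n,k}=\1_{\pi_k^{-1}([0,p_n])}$ with $p_n=1-F(t_n)$, so the family is automatically coupled without any monotonicity assumption on $(t_n)$. Within each dyadic block $I_m=[2^m,2^{m+1})$ the probability levels $p_j$, $j\in I_m$, are discretized on a grid of mesh $2^{-(m+1)}$, which is fine enough that the discretization error in $p_j\cdot j$ is $O(1)$, and the maximal form of Bernstein's inequality is applied to $\sum_{k\le j}B_{m,k}^l$ for each grid level $l$ simultaneously over $j\in I_m$; summing the resulting tail bounds over $l$ and $m$ yields a Borel--Cantelli argument, with a separate elementary estimate when $p_j\cdot j$ is below the $\log\log$ scale. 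This is the content you would have to reproduce; the naive endpoint domination is simply too lossy. I'd suggest reading Lemma \ref{Philipp} carefully --- the two key ideas you are missing are the common-probability-space coupling (which removes the need for monotonicity) and the fine discretization of the level $p_j$ rather than of the time index $n$.
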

Next we will show in an explicit  example that it is possible to apply Theorem \ref{find bn} also to rather involved  distribution functions.
\begin{ex}\label{ex: 1/logx}
Let  $F \coloneqq \sum_{j\in\mathbb{N}} (1-1/{j^2})\cdot\1_{J_j }$ with $J_j\coloneqq \left[2^{\left(j-1\right)^2},2^{j^2}\right)$.
Then, for $s_n\coloneqq2^{n^2}$ we have  $F^{\leftarrow}\left(F\left(s_n\right)\right)=s_n$,
\begin{align*}
 \int_0^{s_n}x\mathrm{d}F\left(x\right)&=\sum_{k=1}^{n}\left(\frac{1}{k^2}-\frac{1}{\left(k+1\right)^2}\right)\cdot 2^{k^2}
 \sim\sum_{k=1}^{n}\frac{2^{k^2}}{k^3}
\end{align*} 
 and
\begin{align*}
 \frac{s_n}{\left({\log_2 s_n}\right)^{3/2}}&\leq \sum_{k=1}^{n}\frac{2^{k^2}}{k^3} \leq \frac{s_n}{n^3}+ \sum_{k=1}^{n-1}\frac{2^{k^2}}{k^3}\sim \frac{s_n}{\left({\log_2 s_n}\right)^{3/2}}.
\end{align*}
Hence we obtain with respect to condition \eqref{cond 1 find bn}  that
\begin{align*}
 \frac{s_n}{ n\int_{0}^{s_{n}}x\,\mathrm{d}F\left(x\right)}\cdot a_n^{1/2+\epsilon}\cdot\left(\log\log n\right)^{1/2-\epsilon}
 &\sim \frac{\left(\log_2 s_n\right)^{3/2}}{n}\cdot \left(\frac{n}{\log_2 s_n}\right)^{1/2+\epsilon}\cdot\left(\log\log n\right)^{1/2-\epsilon}\\
 &=\frac{\left(\log_2 s_n\right)^{1-\epsilon}}{n^{1/2-\epsilon}}\cdot \left(\log\log n\right)^{1/2-\epsilon}
\end{align*}
tends to zero 
if, for some $\epsilon>0$ small,  we  choose 
 $t_n\coloneqq s_{\left\lfloor n^{1/4-\epsilon/2}\right\rfloor}= 2^{\left\lfloor n^{1/4-\epsilon/2}\right\rfloor^2}$.
This choice of $\left(t_n\right)$ also fulfills the condition that
\begin{align*}
 \frac{t_n}{d_n}\cdot \log n\asymp \frac{\left(\log_2 t_n\right)^{3/2}\cdot \log n}{n}
\end{align*}
tends to zero. 
We obtain that 
$a_n=n/\left\lfloor n^{1/4-\epsilon/2}\right\rfloor^2$. Thus, we can choose
\begin{align*}
 b_n\coloneqq\frac{n}{\left\lfloor n^{1/4-\epsilon/2}\right\rfloor^2}+9\cdot \left(\frac{n}{\left\lfloor n^{1/4-\epsilon/2}\right\rfloor^2}\right)^{1/2+\epsilon}\left(\log\log n\right)^{1/2-\epsilon}.
\end{align*}
\end{ex}
\begin{rem}
\label{ESnbn not ex}In general, for the normalizing sequence we do not have  $d_{n}\sim \mathbb{E}\left(S_{n}^{b_{n}}\right)$  as the following example shows.
Let the distribution function $F$ of $X_{1}$ be such that $F\left(x\right)=1-1/\log x$,
for all $x$ sufficiently large. Then, for all $n\in\mathbb{N}$, we have  $\mathbb{E}\left(S_{n}^{b_{n}}\right)=\infty$
 since  
\begin{align*}
\mathbb{E}\left(S_{n}^{b_{n}}\right)&\geq\mathbb{P}\left(S_{n}^{b_{n}}>x\right)\cdot x\geq\mathbb{P}\left(X_{i}>\frac{x}{n-b_{n}}\text{ for all }i\leq n\right)\cdot x\\
& \geq\left(\frac{1}{\log x-\log\left(n-b_{n}\right)}\right)^{n}\cdot x\geq\frac{x}{\left(\log x\right)^{n}}\to \infty,\;\;\mbox{for } x\to\infty .
\end{align*}
\end{rem}
A more general situation will be considered in Theorem \ref{Sbn}
and Corollary \ref{Sbn cont} giving further possibilities for finding
an appropriate trimming sequence $\left(b_{n}\right)$ and a corresponding
normalizing sequence $\left(d_{n}\right)$.

\begin{cor}
\label{cor: thm A} For a sequence of non-negative random variables
 $\left(X_{n}\right)$ with infinite mean
there exists a sequence of natural numbers $\left(b_{n}\right)_{n\in\mathbb{N}}$
with $b_{n}=o\left(n\right)$ and a sequence of positive reals $\left(d_{n}\right)_{n\in\mathbb{N}}$
such that 
\[
\lim_{n\rightarrow\infty}\frac{S_{n}^{b_{n}}}{d_{n}}=1\text{ a.s.}
\]
\end{cor}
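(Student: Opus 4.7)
The plan is to reduce directly to Theorem \ref{find bn}: given any $F$ with infinite mean, the task becomes exhibiting a sequence $(t_n)$ with $F^{\leftarrow}(F(t_n))=t_n$, $t_n\to\infty$, and satisfying \eqref{cond 1 find bn} for some fixed $\epsilon\in(0,1/4)$. The main observation driving the construction is that \eqref{cond 1 find bn} becomes easy along any constant subsequence $t_n\equiv\tau$, so it suffices to let $(t_n)$ visit a growing sequence of values in the range of $F^{\leftarrow}$, staying at each value long enough.

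First I would verify that the set $T:=\{t>0\colon F^{\leftarrow}(F(t))=t\}$ is unbounded. Since $\mathbb{E}(X_1)=\infty$ forces $F(x)<1$ for every $x\geq 0$, the map $F^{\leftarrow}$ is finite on $[0,1)$ and tends to $\infty$ as $y\uparrow 1$; the standard identity $F^{\leftarrow}(F(F^{\leftarrow}(y)))=F^{\leftarrow}(y)$, a consequence of the right-continuity of $F$, then places the entire range of $F^{\leftarrow}$ inside $T$. I would next pick $(\tau_k)\subset T$ with $\tau_k\uparrow\infty$; writing $\mu(t):=\int_0^t x\,\mathrm{d}F(x)$ and $G(t):=1-F(t)$, one has $\mu(\tau_k)\to\infty$ (otherwise $\mathbb{E}(X_1)$ would be finite) and $G(\tau_k)\to 0$.

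Fixing $\epsilon\in(0,1/4)$ and substituting $t_n\equiv\tau$ into the left-hand side of \eqref{cond 1 find bn} yields
\[
\frac{\tau}{\mu(\tau)}\left[G(\tau)^{1/2+\epsilon}\,n^{\epsilon-1/2}(\log\log n)^{1/2-\epsilon}\;\vee\;\frac{\log n}{n}\right],
\]
which vanishes as $n\to\infty$ for every fixed $\tau\in T$ with $\mu(\tau)>0$. A diagonal choice of indices $n_1<n_2<\cdots$ then guarantees that this expression, evaluated with $\tau=\tau_k$, is at most $1/k$ for all $n\geq n_k$. Setting $t_n:=\tau_k$ for $n_k\leq n<n_{k+1}$ produces a sequence in $T$ that tends to infinity and satisfies \eqref{cond 1 find bn}.

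Theorem \ref{find bn} then supplies the trimming sequence $(b_n)$ and normalizer $(d_n)$ with $S_n^{b_n}/d_n\to 1$ almost surely, and a direct check (using $G(\tau_k)\to 0$ and $n^{\epsilon-1/2}(\log\log n)^{1/2-\epsilon}\to 0$) confirms $b_n=o(n)$. The only step of any real substance is the unboundedness of $T$; thereafter the argument is a straightforward diagonal construction exploiting the freedom to take $(t_n)$ piecewise constant.
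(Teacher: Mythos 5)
Your argument is correct, and it uses the same key tool as the paper (Theorem \ref{find bn}), but your construction of the threshold sequence $(t_n)$ is genuinely different. The paper makes an explicit choice $t_n := F^{\leftarrow}\bigl(F\bigl(n^{1/2-2\epsilon}\bigr)\bigr)$, for which the elementary bounds $t_n \leq n^{1/2-2\epsilon}$ and $a_n \leq n$, combined with $\int_0^{t_n} x\,\mathrm{d}F(x)\to\infty$, force \eqref{cond 1 find bn} directly; no diagonalization is needed, and the resulting $b_n$, $d_n$ come out as closed-form expressions in $F$. You instead observe that \eqref{cond 1 find bn} holds trivially along any constant sequence $t_n\equiv\tau$ with $\tau$ a fixed point of $F^{\leftarrow}\circ F$, and then run a step-function diagonalization over an increasing sequence $(\tau_k)\subset T$. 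Both proofs are sound. The paper's route is slicker and stays true to the stated aim of giving an explicit procedure, while yours is more flexible: it does not require guessing a growth rate for $t_n$ in advance, and the same diagonal scheme would survive fairly arbitrary modifications of the condition \eqref{cond 1 find bn}. The one substantive step you flagged, unboundedness of $T$, is handled correctly via the identity $F^{\leftarrow}\bigl(F\bigl(F^{\leftarrow}(y)\bigr)\bigr)=F^{\leftarrow}(y)$ and the fact that $\mathbb{E}(X_1)=\infty$ forces $F(x)<1$ for all $x$, hence $F^{\leftarrow}(y)\to\infty$ as $y\uparrow 1$; your verification of $b_n=o(n)$ at the end is also a point the paper leaves implicit, and you are right to check it.
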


\section{Moderate trimming for general distribution functions\label{gen trimming}}
Before stating this theorem we will
need further notation and  definitions.
Set 
\[
\Psi\coloneqq\left\{ u:\mathbb{N}\rightarrow\mathbb{R}^{+}\colon\sum_{n=1}^{\infty}\frac{1}{u\left(n\right)}<\infty\right\} .
\]
Further, set 
\begin{equation}
c\left(k,n\right)\coloneqq c_{\epsilon,\psi}\left(k,n\right)\coloneqq 8\left(\max\left\{ k,\log\psi\left(\left\lfloor \log n\right\rfloor \right)\right\} \right)^{1/2+\epsilon}\cdot\left(\log\psi\left(\left\lfloor \log n\right\rfloor \right)\right)^{1/2-\epsilon}\label{c(n)}
\end{equation}
for $k\in\mathbb{R}_{\geq1}$, $n\in\mathbb{N}_{\geq3}$,  $0<\epsilon<1/4$,
and $\psi\in\Psi$.
Furthermore, let us introduce the notation
$\check{F}\left(a\right)\coloneqq\lim_{x\nearrow a}F\left(x\right)$, 
$a\in\mathbb{R}$,  to denote 
the left-sided limit of $F$ in $a$.
\begin{thm}
\label{Sbn} 
Let $\left(X_n\right)$ be sequence of non-negative random variables with infinite mean and distribution function $F$.
Further,
let $\left(b_{n}\right)_{n\in\mathbb{N}}$ be a sequence
of natural numbers tending to infinity with $b_{n}=o\left(n\right)$
and $\left(t_{n}\right)_{n\in\mathbb{N}}$ a sequence of positive
real numbers tending to infinity such that $F^{\leftarrow}\left(F\left(t_{n}\right)\right)=t_{n}$,
for all $n\in\mathbb{N}$. For $n\in\mathbb{N}$ set 
\begin{equation*}
a_{n}^{+}\coloneqq n\cdot\left(1-\check{F}\left(t_{n}\right)\right),\; a_{n}^{-}\coloneqq n\cdot\left(1-F\left(t_{n}\right)\right),\;
d_{n}  \coloneqq n\int_{0}^{t_{n}}x\mathrm{d}F\left(x\right).\label{an+ an- def}
\end{equation*}
If there exist $0<\epsilon<1/4$ and $\psi,\widetilde{\psi}\in\Psi$ such that
\begin{align}
b_{n} & \geq a_{n}^{-}+c_{\epsilon,\psi}\left(a_{n}^{-},n\right)\label{an1}
\end{align}
and with $
\gamma_{n}\coloneqq\max\left\{ b_{n}-a_{n}^{-},b_{n}-a_{n}^{+}+c_{\epsilon,\psi}\left(a_{n}^{+},n\right)\right\} $ we have
\begin{align}
\lim _{n\to \infty}{t_{n}}/{d_{n}} \max\left\{\gamma_{n},{\log\widetilde{\psi}\left(n\right)}\right\} & =0,\label{cond ab}
\end{align}
 then   
\begin{align*}
\lim_{n\rightarrow\infty}\frac{S_{n}^{b_{n}}}{d_{n}}=1 & \text{ a.s.}
\end{align*}
\end{thm}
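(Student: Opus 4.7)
The plan is to compare the trimmed sum $S_n^{b_n}$ with the truncated sum
\[
T_n\coloneqq\sum_{k=1}^{n}X_k\mathbbm{1}_{\{X_k\leq t_n\}},\qquad\mathbb{E}(T_n)\approx d_n,
\]
and reduce the statement to two essentially independent tasks: (i) an almost-sure comparison $|T_n-S_n^{b_n}|=o(d_n)$, and (ii) an almost-sure law of large numbers $T_n/d_n\to 1$. For (i), introduce the exceedance counts
\[
M_n\coloneqq|\{k\leq n:X_k>t_n\}|,\qquad M_n^+\coloneqq|\{k\leq n:X_k\geq t_n\}|,
\]
so that $\mathbb{E}(M_n)=a_n^-$ and $\mathbb{E}(M_n^+)=a_n^+$. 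An elementary case analysis according to whether $b_n$ sits below, between, or above $M_n$ and $M_n^+$ yields
\[
|T_n-S_n^{b_n}|\leq \max\{|b_n-M_n|,|b_n-M_n^+|\}\cdot t_n,
\]
since every summand involved in the discrepancy lies in $[0,t_n]$.

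For the comparison task I would apply Bernstein's inequality to the binomials $M_n$ and $M_n^+$. The exponent $1/2+\epsilon$ and the factor $(\log\psi(\lfloor\log n\rfloor))^{1/2-\epsilon}$ in the definition~\eqref{c(n)} of $c_{\epsilon,\psi}$ are tuned precisely so that a Borel--Cantelli argument along the geometric subsequence $n_j\coloneqq\lceil e^j\rceil$ gives, a.s.\ eventually,
\[
M_{n_j}\leq a_{n_j}^-+c_{\epsilon,\psi}(a_{n_j}^-,n_j),\qquad |M_{n_j}^+-a_{n_j}^+|\leq c_{\epsilon,\psi}(a_{n_j}^+,n_j);
\]
summability of the resulting series is guaranteed by $\psi\in\Psi$ via $\sum_j 1/\psi(\lfloor\log n_j\rfloor)<\infty$, and a monotone interpolation (the counts are monotone in $n$ for a fixed threshold) transfers the bounds to all $n$. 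Hypothesis~\eqref{an1} then forces $b_n\geq M_n$ a.s.\ eventually, while the two terms defining $\gamma_n$ bound $(b_n-M_n)^+$ and $(M_n^+-b_n)^+$, respectively; together with~\eqref{cond ab} this produces $|T_n-S_n^{b_n}|=O(\gamma_n t_n)=o(d_n)$.

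For the law-of-large-numbers task, the truncated summands $X_k\mathbbm{1}_{\{X_k\leq t_n\}}$ are bounded by $t_n$ with individual variance at most $t_n\cdot\mathbb{E}(X_1\mathbbm{1}_{\{X_1\leq t_n\}})=t_nd_n/n$, using $x^2\leq t_nx$ on $[0,t_n]$. A second application of Bernstein yields
\[
\mathbb{P}\bigl(|T_n-\mathbb{E}(T_n)|>\delta d_n\bigr)\leq 2\exp\!\bigl(-c\delta^2 d_n/t_n\bigr).
\]
The hypothesis $t_n\log\widetilde\psi(n)/d_n\to 0$ together with $\widetilde\psi\in\Psi$ makes the right-hand side summable along $n_j=\lceil e^j\rceil$; Borel--Cantelli and monotone interpolation then give $T_n/d_n\to 1$ a.s., and combined with (i) we conclude $S_n^{b_n}/d_n\to 1$ a.s.

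The most delicate point is the bookkeeping of the case analysis in (i) near an atom of $F$ at $t_n$, where $a_n^+-a_n^-$ can be substantial: the deviations of $M_n$ and $M_n^+$ then have to be controlled on both sides and separately, and it is precisely to absorb this that \eqref{an1} is phrased in terms of $a_n^-$ while $\gamma_n$ is defined as a maximum over both $b_n-a_n^-$ and $b_n-a_n^++c_{\epsilon,\psi}(a_n^+,n)$. The remainder --- Bernstein concentration, Borel--Cantelli along the geometric subsequence $n_j=\lceil e^j\rceil$, and monotone interpolation between these points --- is fairly standard, and the unusual looking definition~\eqref{c(n)} of $c_{\epsilon,\psi}$ is designed precisely to match Bernstein's exponent to the summability condition on $\psi$.
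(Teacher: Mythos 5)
Your overall strategy — reduce to the truncated sum $T_n^{t_n}$, control the exceedance counts by Bernstein plus Borel--Cantelli, and convert a bound on $|b_n - M_n|$ into a bound on $|T_n^{t_n}-S_n^{b_n}|$ — is the same skeleton as the paper's proof, which uses Theorem~\ref{Thm: Sn* allg} for the truncated-sum law of large numbers and Lemma~\ref{Philipp} for the exceedance counts. But there is a genuine gap in the concentration step, and it is exactly where the paper's real work lies.

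The issue is the phrase ``monotone interpolation (the counts are monotone in $n$ for a fixed threshold).'' The threshold is \emph{not} fixed: $M_n = \#\{k\le n: X_k>t_n\}$ depends on $n$ through both the range of $k$ and the level $t_n$, and the theorem places no regularity assumption (such as monotonicity of $t_n$ or any control on the oscillation of $p_n=1-F(t_n)$) that would make $M_n$ monotone or make $p_n$ vary slowly across a block $[n_j,n_{j+1}]$. So checking the Bernstein bound only at $n_j=\lceil e^j\rceil$ and then interpolating does not transfer the bound to all $n$. Moreover, with $n_{j+1}/n_j\to e$ the centers $a_n^-=n p_n$ and the normalizers $d_n$ can move by a constant factor inside a block even when $p_n$ is well-behaved, so even a one-sided sandwich would lose a constant that the argument cannot afford. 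The paper gets around precisely this obstruction with the two-level scheme in Lemma~\ref{Philipp}: it slices $\mathbb{N}$ into dyadic blocks $I_m=[2^m,2^{m+1})$, then within each block discretizes the probability into levels $l/2^{m+1}$, applies the \emph{maximal} Bernstein inequality (Lemma~\ref{Bernstein}) to the count at each fixed level $l$ over the whole block, and finally union-bounds over $l$ and over $m$. That extra discretization over probability levels, together with the maximal inequality inside the block, is precisely what absorbs the uncontrolled dependence of $p_n$ on $n$; it is also why the factor $\log\psi(\lfloor\log n\rfloor)$, rather than $\log\psi(n)$, appears in $c_{\epsilon,\psi}$. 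Without a substitute for this step your argument does not close.

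Two smaller remarks. First, your displayed bound $|T_n - S_n^{b_n}| \le \max\{|b_n-M_n|,|b_n-M_n^+|\}\cdot t_n$ is false when $b_n<M_n$: the order statistics with indices in $(b_n,M_n]$ are all strictly larger than $t_n$, so the summands ``involved in the discrepancy'' do \emph{not} lie in $[0,t_n]$. You do note that \eqref{an1} forces $b_n\ge M_n$ eventually, and in that regime the correct bound $0\le T_n^{t_n}-S_n^{b_n}\le (b_n-M_n)t_n$ holds and, combined with $M_n\ge a_n^- - c_{\epsilon,\psi}(a_n^-,n)$ and $c_{\epsilon,\psi}(a_n^-,n)\le b_n-a_n^-\le\gamma_n$, does give the needed $o(d_n)$ bound without the finer centering $\,(b_n-a_n^-)t_n$ that the paper tracks; so this part is salvageable. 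Second, for the law of large numbers for $T_n^{t_n}$ you do not need, and should not use, a subsequence at all: the condition $\widetilde\psi\in\Psi$ makes the Bernstein tail bound summable over \emph{all} $n$, which is exactly how Theorem~\ref{Thm: Sn* allg} is proved, and this sidesteps the interpolation issue for that half of the argument.
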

Under additional continuity assumptions on the distribution function the conditions simplify. 
\begin{cor}
\label{Sbn cont} In the above setting let us assume there exists $\kappa\in\mathbb{R}$ such that $F\lvert_{\left[\kappa,\infty\right)}$
is continuous. Let $\left(b_{n}\right)_{n\in\mathbb{N}}$
be a sequence of natural numbers tending to infinity with $b_{n}=o\left(n\right)$,
let $\left(t_{n}\right)_{n\in\mathbb{N}}$ be a sequence of positive
real numbers tending to infinity and set $a_{n}\coloneqq n\left(1-F\left(t_{n}\right)\right)$ and let $d_n$  be defined  as in Theorem \ref{Sbn} .
If there exist $0<\epsilon<1/4$ and $\psi,\widetilde{\psi}\in\Psi$ such that
\begin{align}
\tilde{\gamma}_{n}\coloneqq b_{n}-a_{n}\geq c_{\epsilon,\psi}\left(a_{n},n\right)\label{an 1}
\end{align}
and 
\begin{align}
\lim_{n\to \infty}{t_{n}}/{d_n }\max\left\{ \tilde{\gamma}_{n},\log\widetilde{\psi}\left(n\right)\right\} =0,\label{cond ab1}
\end{align}
then  
\begin{align*}
\lim_{n\rightarrow\infty}\frac{S_{n}^{b_{n}}}{d_{n}}=1\text{ a.s.}
\end{align*}
\end{cor}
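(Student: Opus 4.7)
The approach is to deduce Corollary \ref{Sbn cont} directly from Theorem \ref{Sbn}, using the fact that continuity of $F$ on $[\kappa,\infty)$ collapses the two ``one-sided'' quantities $a_n^+$ and $a_n^-$ appearing in the theorem to the single quantity $a_n$ used in the corollary. No new probabilistic input is needed; everything reduces to a verification that the hypotheses of Theorem \ref{Sbn} follow from the hypotheses stated here.

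The first step is to handle a mild mismatch: Theorem \ref{Sbn} requires $F^{\leftarrow}(F(t_n)) = t_n$, which the corollary does not assume. I would replace $t_n$ by $t_n' \coloneqq F^{\leftarrow}(F(t_n))$. The general identity $F^{\leftarrow}\bigl(F(F^{\leftarrow}(y))\bigr) = F^{\leftarrow}(y)$, routine from the definition of the generalized inverse, gives $F^{\leftarrow}(F(t_n')) = t_n'$ automatically. Moreover $t_n' \leq t_n$, and continuity of $F$ at $t_n$ (for $n$ large enough that $t_n \geq \kappa$) forces $F(t_n') = F(t_n)$ together with $F$ being constant on $[t_n', t_n]$. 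Therefore $a_n$ and $d_n$ are invariant under this substitution, and since $t_n' \leq t_n$, hypothesis \eqref{cond ab1} is preserved.

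Next, continuity at $t_n'$ yields $\check{F}(t_n') = F(t_n')$, hence $a_n^+ = a_n^- = a_n$ in the notation of Theorem \ref{Sbn}. The definition of $\gamma_n$ there then collapses to $\tilde{\gamma}_n + c_{\epsilon,\psi}(a_n,n)$, which by \eqref{an 1} is bounded above by $2\tilde{\gamma}_n$. Thus hypothesis \eqref{an1} of Theorem \ref{Sbn} is literally \eqref{an 1}, and \eqref{cond ab1} implies \eqref{cond ab} with the same $\widetilde{\psi}$, because $\max\{\gamma_n,\log\widetilde{\psi}(n)\} \leq 2\max\{\tilde{\gamma}_n,\log\widetilde{\psi}(n)\}$. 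An application of Theorem \ref{Sbn} to the modified sequence $(t_n')$ then delivers the desired almost sure limit.

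No substantive obstacle is expected; the proof is entirely bookkeeping. The only step meriting a moment of care is the passage from $t_n$ to $t_n'$: one must verify that the quantities $a_n$ and $d_n$ entering \eqref{an 1} and \eqref{cond ab1} are genuinely unchanged upon replacing $t_n$ by the left endpoint of the plateau of $F$ at level $F(t_n)$, which holds because $F$ puts no mass on $(t_n', t_n]$.
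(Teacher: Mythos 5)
Your argument is correct and mirrors the paper's own proof: continuity of $F$ past $\kappa$ forces $a_n^-=a_n=a_n^+$ for large $n$, which together with \eqref{an 1} gives $\tilde\gamma_n\le\gamma_n\le 2\tilde\gamma_n$, so \eqref{cond ab1} implies \eqref{cond ab} and Theorem \ref{Sbn} applies. Your additional step of passing from $t_n$ to $t_n'=F^{\leftarrow}(F(t_n))$ in order to secure the hypothesis $F^{\leftarrow}(F(t_n))=t_n$, while checking that $a_n$, $d_n$, and \eqref{cond ab1} are unaffected by the substitution, is a careful detail the paper's proof silently glosses over.
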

In the following section we will first give a proof of Corollary \ref{Sbn cont}, Theorem \ref{find bn}, and Corollary \ref{cor: thm A} as a consequence of Theorem \ref{Sbn}. 
Finally, in Section \ref{subsec: proof gen thm}, we give the proof of the more general Theorem
\ref{Sbn}.
\subsection{Proofs of main theorems}
\begin{proof} [Proof of Corollary \ref{Sbn cont}] We will make
use of Theorem \ref{Sbn} as follows. First note that, since $F\lvert_{\left[\kappa,\infty\right)}$
is continuous, $a_{n}^{-}=a_{n}=a_{n}^{+}$, for $n$ sufficiently
large. Hence, \eqref{an 1} implies \eqref{an1} and the definition
of $\gamma_{n}$ in Theorem \ref{Sbn} implies $\tilde{\gamma}_{n}\leq\gamma_{n}\leq2\tilde{\gamma}_{n}$
for $n$ large. Combining the latter chain of inequalities with \eqref{cond ab1}
gives \eqref{cond ab} showing that all necessary conditions stated
in Theorem \ref{Sbn} are fulfilled.\end{proof} 

\begin{proof}[Proof of Theorem \ref{find bn}] 
Theorem \ref{find bn}
is a special case of Theorem \ref{Sbn}, where $a_{n}=a_{n}^{-}$ and up to a multiplicative constant we use  the fixed sequence $(\log\log n)$ instead of $c\left(a_n^-,n\right)$.
Choosing  $\psi\left(n\right)\coloneqq n^{9/8}$, we find by definition of $\left(b_{n}\right)$ in Theorem \ref{find bn} that  inequality
\eqref{an1} obviously holds.  
In the next steps we will prove that with $\widetilde{\psi}\left(n\right)\coloneqq n^2$ we have that  \eqref{cond 1 find bn} implies
\eqref{cond ab}:

On the one hand by definition of $b_n$ 
we have that 
\begin{align}
 b_n-a_n
 &\leq 9 \max\left\{a_n^{1/2+\epsilon}\cdot\left(\log\log n\right)^{1/2-\epsilon},\log n\right\}+1.\label{eq: proof TB 1}
\end{align}
On the other hand $a_n^+-c_{\epsilon,\psi}\left(a_n^+,n\right) \geq a_n-c_{\epsilon,\psi}\left(a_n,n\right)$ implies
\begin{align}
 b_{n}-a_{n}^{+}+c_{\epsilon,\psi}\left(a_{n}^{+},n\right)
 &\leq b_{n}-a_{n}+c_{\epsilon,\psi}\left(a_{n},n\right).\label{eq: proof TB 2}
\end{align}
Since 
\begin{align*}
 c_{\epsilon,\psi}\left(a_{n},n\right)
 &=8\left(\max\left\{ a_n,9/8\log\left\lfloor \log n\right\rfloor \right\} \right)^{1/2+\epsilon}\cdot\left(9/8\log\left\lfloor \log n\right\rfloor \right)^{1/2-\epsilon}\\
 &\leq 9\max\left\{a_n^{1/2+\epsilon}\cdot\left(\log\log n\right)^{1/2-\epsilon},\log n\right\},
\end{align*}
combining \eqref{eq: proof TB 1} and \eqref{eq: proof TB 2} yields
\begin{align*}
 b_{n}-a_{n}^{+}+c_{\epsilon,\psi}\left(a_{n}^{+},n\right)
 &\leq 18\max\left\{a_n^{1/2+\epsilon}\cdot\left(\log\log n\right)^{1/2-\epsilon},\log n\right\}.
\end{align*}

By our choice of $\widetilde{\psi}$ we have that 
\begin{align*}
 18\max\left\{a_n^{1/2+\epsilon}\cdot\left(\log\log n\right)^{1/2-\epsilon},\log n\right\}\geq\max\left\{\gamma_n, \log\widetilde{\psi}\left(n\right)\right\}
\end{align*}
and \eqref{cond 1 find bn} implies 
\eqref{cond ab}, which proves the statement of the theorem.
\end{proof}

\begin{proof}[Proof of Corollary \ref{cor: thm A}]
We aim to
apply Theorem \ref{find bn}. In order to do so we define, for $0<\epsilon<1/4$ and  $n\in \mathbb{N}$, 
\begin{align*}
t_{n}\coloneqq F^{\leftarrow}\left(F\left(n^{1/2-2\epsilon}\right)\right) \;\mbox{ and }\;d_{n}\coloneqq n\int_{0}^{t_{n}}x\,\mathrm{d}F\left(x\right).
\end{align*}
Since the expectation of $X_{1}$ is
infinite, $t_{n}$ tends to infinity and is thus a possible choice
to apply Theorem \ref{find bn}. We obtain thereby $a_{n}=n\left(1-F\left(n^{1/2-2\epsilon}\right)\right)$.
Furthermore,    by definition  we have  
$t_{n} =\inf\left\{ x\colon F\left(x\right)\geq F\left(n^{1/2-2\epsilon}\right)\right\} \leq n^{1/2-2\epsilon}$. 
Since $t_{n}$ tends to infinity, we also have  that $\lim_{n\rightarrow\infty}\int_{0}^{t_{n}}x\mathrm{d}F\left(x\right)=\infty$. Hence, to show condition \eqref{cond 1 find bn} it suffices to show
\begin{align*}
\lim_{n\to\infty}n^{1/2-2\epsilon}\cdot n^{-1}\cdot\max\left\{a_{n}^{1/2+\epsilon}\cdot\left(\log\log n\right)^{1/2-\epsilon},\log n\right\}& =0.
\end{align*}
Since 
$\lim_{n\to\infty} a_{n}^{1/2+\epsilon}\cdot\left(\log\log n\right)^{1/2-\epsilon}/n^{1/2+2\epsilon}=0$
and $\lim_{n\to\infty} \log n/n^{1/2+2\epsilon}=0$,
this follows immediately.
\end{proof}

\subsection{Proof of Theorem \ref{Sbn}}\label{subsec: proof gen thm}
In order to prove Theorem \ref{Sbn} we will need the following lemma.
\begin{lem}\label{log gamma log tilde gamma}
Let $a,b>1$ and $\psi\in\Psi$. Then there exists $\omega\in\Psi$ such that 
\begin{align}
\omega\left(\left\lfloor \log_b n\right\rfloor \right)\leq \psi\left(\left\lfloor \log_a n\right\rfloor\right).\label{phi psi a b} 
\end{align}
\end{lem}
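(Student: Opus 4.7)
The plan is to construct $\omega$ by ``pulling back'' $\psi$ along the approximate linear relation between $\lfloor \log_a n\rfloor$ and $\lfloor \log_b n\rfloor$. Set $\alpha:=\log b/\log a$, so that $\log_a n = \alpha\log_b n$ for every $n\geq 1$. For each $m\in\mathbb{N}$ let
\[
I_m := \bigl\{ k \in \mathbb{N} : \lfloor \alpha m\rfloor \leq k \leq \lceil \alpha(m+1)\rceil \bigr\},
\]
a finite non-empty subset of $\mathbb{N}$ (for $m\geq 1$ one has $\lceil \alpha(m+1)\rceil \geq 1$). I would then define
\[
\omega(m) := \min_{k\in I_m} \psi(k),
\]
which gives a map $\omega: \mathbb{N}\to\mathbb{R}^+$.

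The inequality \eqref{phi psi a b} is then immediate: if $m=\lfloor\log_b n\rfloor$, then $n\in[b^m,b^{m+1})$ forces $\log_a n\in[\alpha m,\alpha(m+1))$, hence $\lfloor\log_a n\rfloor\in I_m$, and so $\omega(m) \leq \psi(\lfloor\log_a n\rfloor)$ by construction.

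The core of the argument is to verify $\omega\in\Psi$. I would bound the minimum by the sum and swap the order of summation:
\[
\sum_{m\geq 1}\frac{1}{\omega(m)} \;=\; \sum_{m\geq 1} \max_{k\in I_m}\frac{1}{\psi(k)} \;\leq\; \sum_{m\geq 1}\sum_{k\in I_m}\frac{1}{\psi(k)} \;=\; \sum_{k\geq 1}\frac{\#\{m\in\mathbb{N}:k\in I_m\}}{\psi(k)}.
\]
For each fixed $k$, the two conditions $\lfloor \alpha m\rfloor \leq k$ and $k\leq\lceil\alpha(m+1)\rceil$ confine $m$ to an interval of length at most $2/\alpha+1$, so the multiplicity is bounded by a constant $C_\alpha$ depending only on $\alpha$. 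Together with $\psi\in\Psi$, this yields $\sum_m 1/\omega(m)\leq C_\alpha\sum_k 1/\psi(k)<\infty$.

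I expect no serious obstacle; the only bookkeeping worth being careful about is the calibration of the endpoints of $I_m$ so that, on the one hand, $I_m$ absorbs every possible value of $\lfloor\log_a n\rfloor$ compatible with $\lfloor\log_b n\rfloor=m$, and, on the other hand, the overlap count $\#\{m:k\in I_m\}$ stays bounded by a constant independent of $k$.
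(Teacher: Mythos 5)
Your construction is essentially the same as the paper's: you set $\alpha=\log b/\log a=\log_a b$ and define $\omega(m)$ as the minimum of $\psi$ over a finite window of integers around $\alpha m$, which is exactly the paper's $\omega(n)=\min\{\psi(\lfloor n\log_a b\rfloor+j):0\le j\le\lceil\log_a b\rceil\}$ up to an immaterial shift of the window's endpoints. The only presentational difference is in checking $\omega\in\Psi$: the paper cites that scaling $n\mapsto\lfloor\kappa n\rfloor$ and taking windowed minima both preserve $\Psi$, whereas you carry out the direct computation (bound $1/\min$ by a sum, swap the order of summation, and bound the multiplicity $\#\{m:k\in I_m\}$ by a constant depending only on $\alpha$) — which is precisely the verification underlying the paper's cited facts.
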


\begin{proof}
We define $\omega:\mathbb{N}\rightarrow\mathbb{R}_{>0}$ by 
\begin{align}
\omega\left(n\right):=\min\left\{ \psi\left(\left\lfloor n\cdot {\log_a b}\right\rfloor +j\right)\colon j\in\left\{ 0,\ldots,\left\lceil {\log_a b}\right\rceil \right\} \right\}.\label{phi n}
\end{align}
Since $\psi\in\Psi$,  the functions 
$\overline{\psi}:\mathbb{N}\to\mathbb{R}_{>0}$ and $\widetilde{\psi}:\mathbb{N}\to\mathbb{R}_{>0}$ given by $\overline{\psi}\left(n\right):=\psi\left(\left\lfloor \kappa\cdot n\right\rfloor \right)$
with $\kappa>0$ and $\widetilde{\psi}\left(n\right):=\min\left\{ \psi\left(n\right),\ldots,\psi\left(n+k\right)\right\} $,  $k\in \mathbb{N}$, 
we have $\widetilde{\psi},\overline{\psi}\in\Psi$.
Hence, $\omega\in\Psi$.
Plugging  $\left\lfloor \log_b n\right\rfloor $ into $\omega$ in \eqref{phi n} yields 
\begin{align*}
\omega\left(\left\lfloor \log_b n\right\rfloor \right)=\min\left\{ \psi\left(\left\lfloor \left\lfloor {\log_b n}\right\rfloor\cdot{\log_a b }\right\rfloor +j\right)\colon j\in\left\{ 0,\ldots,\left\lceil {\log_a b}\right\rceil \right\} \right\} .
\end{align*}
Now, 
\eqref{phi psi a b} follows by observing
$\left\lfloor {\log_a n}\right\rfloor -\left\lceil {\log_a b}\right\rceil \leq\left\lfloor \left\lfloor {\log_b n}\right\rfloor\cdot{\log_a b }\right\rfloor \leq\left\lfloor {\log_a n}\right\rfloor$.
\end{proof}

Before proving Theorem \ref{Sbn} we will first prove the following
theorem concerning the truncated random variables defined as follows.
For a real valued sequence $\left(t_{n}\right)_{n\in\mathbb{N}}$
we let  
\begin{align}
  T_{n}^{t_{n}}\coloneqq\sum_{k=1}^{n}X_{k}\cdot\1_{\{X_{k}\leq t_{n} \}}\label{eq: def Tn tn}
\end{align}
denote the corresponding truncated sum process. In Theorem \ref{Thm: Sn* allg}
we will provide conditions on the real
valued sequence $\left(t_{n}\right)_{n\in\mathbb{N}}$ such that a
non-trivial strong law holds for $T_{n}^{t_{n}}$.

\begin{thm}
\label{Thm: Sn* allg} 
Let $\left(X_n\right)$ be sequence of non-negative random variables with infinite mean and distribution function $F$.
For a positive valued sequence $\left(t_{n}\right)_{n\in\mathbb{N}}$
assume $F\left(t_{n}\right)>0$ for all $n\in\mathbb{N}$ and there
exists $\psi\in\Psi$ such that 
\begin{align}
\frac{t_{n}}{\int_{0}^{t_{n}}x\,\mathrm{d}F\left(x\right)}=o\left(\frac{n}{\log\psi\left(n\right)}\right)\label{cond a}
\end{align}
holds. Then 
\begin{align*}
\lim_{n\rightarrow\infty}\frac{T_{n}^{t_{n}}}{n\int_{0}^{t_{n}}x\,\mathrm{d}F\left(x\right)}=1\text{ a.s.}
\end{align*}

\end{thm}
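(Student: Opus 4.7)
The plan is to reduce the statement to a standard concentration inequality for independent bounded variables and then apply the Borel--Cantelli lemma. For each fixed $n$ I set $Y_{k,n}\coloneqq X_k\cdot\1_{\{X_k\leq t_n\}}$, so that $T_n^{t_n}=\sum_{k=1}^n Y_{k,n}$ is a sum of $n$ i.i.d.\ random variables taking values in $[0,t_n]$, with common mean $\mu_n\coloneqq\int_0^{t_n}x\,\mathrm{d}F(x)=d_n/n$. The theorem is therefore a law of large numbers around the mean for the triangular array $(Y_{k,n})_{k\leq n}$.

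The key estimate is the variance bound
\[
\operatorname{Var}(Y_{k,n})\leq \mathbb{E}(Y_{k,n}^2)\leq t_n\cdot\mu_n=t_n\cdot d_n/n.
\]
I would feed this together with the uniform bound $|Y_{k,n}|\leq t_n$ into Bernstein's inequality applied to the centred sum $T_n^{t_n}-d_n$. Since both the variance term $n\operatorname{Var}(Y_{k,n})\leq t_n d_n$ and the large-deviation correction $t_n\cdot\epsilon d_n/3$ in the Bernstein denominator are of order $t_n d_n$, this produces, for every $\epsilon\in(0,1)$, an inequality of the form
\[
\mathbb{P}\bigl(|T_n^{t_n}-d_n|>\epsilon\cdot d_n\bigr)\leq 2\exp\left(-C_\epsilon\cdot \frac{d_n}{t_n}\right)
\]
with a constant $C_\epsilon>0$ depending only on $\epsilon$.

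To close the argument, observe that hypothesis \eqref{cond a} is precisely the statement that $d_n/(t_n\log\psi(n))\to\infty$, so that eventually $C_\epsilon\cdot d_n/t_n\geq \log\psi(n)$ and the probability above is bounded by $2/\psi(n)$. Since $\psi\in\Psi$, this bound is summable in $n$, and the Borel--Cantelli lemma yields $\limsup_{n\to\infty}|T_n^{t_n}/d_n-1|\leq \epsilon$ almost surely; intersecting the resulting full-measure sets along a countable null sequence $\epsilon_j\downarrow 0$ gives the theorem.

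I do not anticipate a serious obstacle: the argument is a textbook application of Bernstein's inequality. The one point requiring care is that $(Y_{k,n})$ is a genuine triangular array (the truncation level varies with $n$), so monotone-in-$n$ or subsequence comparison arguments are unavailable, and the summable-in-$n$ tail bound combined with $\sum 1/\psi(n)<\infty$ is what really carries the proof. The infinite-mean assumption on $X_1$ is not used in this lemma per se, but it is what will eventually force $t_n\to\infty$ (hence $d_n\to\infty$) in the applications to Theorem \ref{Sbn}.
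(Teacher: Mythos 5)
Your proposal is correct and follows essentially the same route as the paper: truncate, apply a Bernstein-type inequality (the paper uses its Lemma \ref{Bernstein 1}, which packages exactly the variance bound $\mathbb{E}(Y_{k,n}^2)\leq t_n\mu_n$ you state explicitly), then use hypothesis \eqref{cond a} to make the exponential tail summable and finish with Borel--Cantelli and a countable sequence $\epsilon_j\downarrow 0$. The only cosmetic difference is that the paper states the Bernstein bound with the constant $3\epsilon^2/(6+2\epsilon)$ rather than an abstract $C_\epsilon$.
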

As an essential tool in the proof of Theorem \ref{Thm: Sn* allg}
we will need the following lemma which generalizes \emph{Bernstein's
inequality} and can be found for example in \cite{hoeffding_probability_1963}.
For the following we denote by $\mathbb{V}\left(\xi\right)$ the variance of a random variable $\xi$.
\begin{lem}
[Generalized Bernstein's inequality]\label{Bernstein} For $n\in\mathbb{N}$
let $Y_{1},\ldots,Y_{n}$ be independent random variables such that
$\left| Y_{i}-\mathbb{E}\left(Y_{i}\right)\right|\leq M<\infty$
for $i=1,\ldots,n$. Let $Z_{n}\coloneqq\sum_{i=1}^{n}Y_{i}$. Then
we have for all $t>0$ that 
\begin{align*}
\MoveEqLeft\mathbb{P}\left(\max_{k\leq n}\left|Z_{k}-\mathbb{E}\left(Z_{k}\right)\right|\geq t\right)\leq2\exp\left(-\frac{t^{2}}{2\mathbb{V}\left(Z_{n}\right)+\frac{2}{3}Mt}\right).
\end{align*}

\end{lem}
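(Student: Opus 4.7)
The plan is to reduce to the centered case and combine a standard exponential-moment (Chernoff) estimate with Doob's submartingale inequality, which upgrades a one-sided tail bound on $Z_n$ to a maximal bound on $\max_{k\leq n}|Z_k-\mathbb{E}(Z_k)|$. Setting $\tilde Y_i\coloneqq Y_i-\mathbb{E}(Y_i)$ we have $|\tilde Y_i|\leq M$, $\mathbb{E}(\tilde Y_i)=0$, and $\tilde Z_k\coloneqq\sum_{i=1}^k\tilde Y_i=Z_k-\mathbb{E}(Z_k)$ with $\mathbb{V}(\tilde Z_n)=\mathbb{V}(Z_n)=:V$. Writing $\{\max_{k\leq n}|\tilde Z_k|\geq t\}\subseteq \{\max_{k\leq n}\tilde Z_k\geq t\}\cup\{\max_{k\leq n}(-\tilde Z_k)\geq t\}$ and noting that the variables $-\tilde Y_i$ satisfy the same centered/bounded hypotheses, a union bound reduces everything to the one-sided maximal estimate $\mathbb{P}(\max_{k\leq n}\tilde Z_k\geq t)\leq\exp(-t^2/(2V+(2/3)Mt))$; the factor $2$ in the conclusion then reflects the two tails.

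For the exponential moment I would expand $\mathbb{E}(e^{\lambda\tilde Y_i})=1+\sum_{k\geq 2}\lambda^k\mathbb{E}(\tilde Y_i^k)/k!$, use $|\tilde Y_i|^k\leq M^{k-2}\tilde Y_i^2$ for $k\geq 2$ to control higher moments by $\sigma_i^2\coloneqq\mathbb{V}(\tilde Y_i)$, and sum the resulting geometric-type series. Combined with $1+u\leq e^u$ this yields for every $0<\lambda<3/M$ the bound
\[
\mathbb{E}(e^{\lambda\tilde Y_i})\leq\exp\!\left(\frac{\lambda^2\sigma_i^2}{2(1-M\lambda/3)}\right),
\]
and by independence $\mathbb{E}(e^{\lambda\tilde Z_n})\leq\exp(\lambda^2 V/(2(1-M\lambda/3)))$.

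To pass to the maximum, observe that $(\tilde Z_k)_{k=1}^n$ is a martingale with respect to its natural filtration, hence by Jensen's inequality $(e^{\lambda\tilde Z_k})_{k=1}^n$ is a non-negative submartingale for every $\lambda>0$. Doob's maximal inequality combined with the MGF bound gives
\[
\mathbb{P}\!\left(\max_{k\leq n}\tilde Z_k\geq t\right)\leq e^{-\lambda t}\,\mathbb{E}(e^{\lambda\tilde Z_n})\leq\exp\!\left(-\lambda t+\frac{\lambda^2 V}{2(1-M\lambda/3)}\right).
\]
Choosing $\lambda\coloneqq t/(V+Mt/3)\in(0,3/M)$, one checks $1-M\lambda/3=V/(V+Mt/3)$ so that the exponent collapses to the asserted $-t^2/(2V+(2/3)Mt)$.

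No step presents a genuine obstacle, since the result is classical (as indicated by the citation to Hoeffding). The only care point is the crude bound $|\tilde Y_i|^k\leq M^{k-2}\tilde Y_i^2$ in the MGF computation, which absorbs all higher moments into $\sigma_i^2$ rather than producing spurious $M^k$ terms and simultaneously forces the restriction $\lambda<3/M$ through convergence of the geometric series; degenerate cases such as $V=0$ are trivial since then $\tilde Y_i=0$ almost surely for every $i$.
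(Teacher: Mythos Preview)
Your proof is correct and follows the standard route to Bernstein-type maximal inequalities: the MGF bound via the crude moment estimate $|\tilde Y_i|^k\leq M^{k-2}\tilde Y_i^2$ together with $k!\geq 2\cdot 3^{k-2}$ yields the exponent $\lambda^2\sigma_i^2/(2(1-M\lambda/3))$, Doob's inequality upgrades the tail bound to a maximal one, and the explicit choice $\lambda=t/(V+Mt/3)$ produces exactly the claimed exponent. The degenerate case $V=0$ is handled as you note.

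As for comparison with the paper: there is nothing to compare. The paper does not prove Lemma~\ref{Bernstein} at all; it is stated without proof and attributed to Hoeffding~\cite{hoeffding_probability_1963}. Your argument simply supplies the omitted classical proof.
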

From Lemma \ref{Bernstein} the following lemma is easily deduced.
\begin{lem}
\label{Bernstein 1} For $n\in\mathbb{N}$ let $Y_{1},\ldots,Y_{n}$
be i.i.d.\ non-negative random variables such that $Y_{1}\leq K<\infty$.
Let $Z_{n}\coloneqq\sum_{i=1}^{n}Y_{i}$. Then we have for all $\kappa>0$
that 
\begin{align*}
\MoveEqLeft\mathbb{P}\left(\max_{k\leq n}\left|Z_{k}-\mathbb{E}\left(Z_{k}\right)\right|\geq\kappa\cdot\mathbb{E}\left(Z_{n}\right)\right)\leq2\exp\left(-\frac{3\kappa^{2}}{6+2\kappa}\cdot\frac{\mathbb{E}\left(Z_{n}\right)}{K}\right).
\end{align*}
\end{lem}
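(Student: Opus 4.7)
The plan is to specialize Lemma \ref{Bernstein} by choosing $t \coloneqq \kappa \cdot \mathbb{E}(Z_n)$ and then replacing the constants $M$ and $\mathbb{V}(Z_n)$ appearing in the exponent by estimates in terms of $K$ and $\mathbb{E}(Z_n)$. Since $0 \leq Y_i \leq K$ and $\mathbb{E}(Y_i) \in [0,K]$, the centered variable satisfies $|Y_i - \mathbb{E}(Y_i)| \leq K$, so we may take $M = K$. For the variance, independence gives $\mathbb{V}(Z_n) = n\, \mathbb{V}(Y_1) \leq n\, \mathbb{E}(Y_1^2) \leq n K\, \mathbb{E}(Y_1) = K\, \mathbb{E}(Z_n)$, where the second inequality uses $Y_1 \leq K$ together with non-negativity.

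Plugging these bounds into the exponent from Lemma \ref{Bernstein} yields
\[
\frac{t^2}{2\mathbb{V}(Z_n) + \tfrac{2}{3} M t}
\;\geq\; \frac{\kappa^2 \mathbb{E}(Z_n)^2}{2 K\, \mathbb{E}(Z_n) + \tfrac{2}{3} K \kappa\, \mathbb{E}(Z_n)}
\;=\; \frac{3\kappa^2}{6 + 2\kappa}\cdot \frac{\mathbb{E}(Z_n)}{K},
\]
which is exactly the quantity appearing inside the exponential in the claim. Since the factor $2$ in front of the exponential is preserved, the stated inequality follows directly. There is no genuine obstacle here: the lemma is a routine specialization of Lemma \ref{Bernstein} to the case of i.i.d.\ non-negative bounded summands, the only nontrivial observation being the variance bound $\mathbb{V}(Z_n) \leq K\, \mathbb{E}(Z_n)$ which replaces $\mathbb{V}(Z_n)$ by a quantity proportional to $\mathbb{E}(Z_n)$ and thus produces the convenient form $\mathbb{E}(Z_n)/K$ in the exponent.
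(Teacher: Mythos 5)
Your proof is correct and is exactly the routine specialization the paper has in mind when it remarks that Lemma~\ref{Bernstein 1} is ``easily deduced'' from Lemma~\ref{Bernstein}: set $t=\kappa\,\mathbb{E}(Z_n)$, bound $M$ by $K$ via $0\le Y_i,\mathbb{E}(Y_i)\le K$, and bound $\mathbb{V}(Z_n)\le K\,\mathbb{E}(Z_n)$ using $\mathbb{V}(Y_1)\le\mathbb{E}(Y_1^2)\le K\,\mathbb{E}(Y_1)$. The algebra then produces the constant $\tfrac{3\kappa^2}{6+2\kappa}$ exactly as stated.
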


\begin{proof} [Proof of Theorem \ref{Thm: Sn* allg}] Fix $\epsilon>0$.
Clearly, we have $X_{1}^{t_{n}}\leq t_{n}$. Hence, we can apply Lemma
\ref{Bernstein 1} to the sequence $X_{1}^{t_{n}},\ldots,X_{n}^{t_{n}}$
to obtain 
\begin{align*}
\mathbb{P}\left(\left|T_{n}^{t_{n}}-\mathbb{E}\left(T_{n}^{t_{n}}\right)\right|\geq\epsilon\mathbb{E}\left(T_{n}^{t_{n}}\right)\right) & <2\exp\left(-\frac{3\epsilon^{2}}{6+2\epsilon}\cdot\frac{\mathbb{E}\left(T_{n}^{t_{n}}\right)}{t_{n}}\right).
\end{align*}

In order to apply the Borel-Cantelli Lemma note that condition \eqref{cond a}
implies there exists $\psi\in\Psi$ such that
\[
\frac{3\epsilon^{2}}{6+2\epsilon}\cdot\frac{n\cdot\int_{0}^{t_{n}}x\mathrm{d}F\left(x\right)}{t_{n}}\geq\log\psi\left(n\right),
\]
for all $n$ sufficiently large. Using the
fact that $\mathbb{E}\left(T_{n}^{t_{n}}\right)=n\int_{0}^{t_{n}}x\mathrm{d}F\left(x\right)$
gives 
\[
\sum_{n=1}^{\infty}\exp\left(-\frac{3\epsilon^{2}}{6+2\epsilon}\cdot\frac{\mathbb{E}\left(T_{n}^{t_{n}}\right)}{t_{n}}\right)<\infty
\]
and hence $\mathbb{P}\left(\left|T_{n}^{t_{n}}-\mathbb{E}\left(T_{n}^{t_{n}}\right)\right|\geq\epsilon\mathbb{E}\left(T_{n}^{t_{n}}\right)\text{infinitely often}\right)=0$.
Since $\epsilon>0$
is arbitrary, it follows that $\left|T_{n}^{t_{n}}-\mathbb{E}\left(T_{n}^{t_{n}}\right)\right|=o\left(\mathbb{E}\left(T_{n}^{t_{n}}\right)\right)$
almost surely and hence the assertion follows. \end{proof} 

In the proof of Theorem \ref{Sbn} we will use the following lemma which requires the probability space
 $\left(\Omega, \mathcal{A},\mathbb{P}\right)\coloneqq \bigotimes_{n=1}^{\infty}\left(\left[0,1\right], \mathcal{B},\lambda\right)$, where $\mathcal{B}$ denotes the Borel sets on $\left[0,1\right]$ and the family of events  
$\Lambda_{n,k}\coloneqq \pi_k^{-1}\left(\left[0,p_n\right]\right)$, for $k=1,\ldots,n\in\mathbb{N}$ with $\pi_k$ denoting the projection to the $k$th component.
\begin{lem}
\label{Philipp}
Let $A_{n,k}\coloneqq \mathbbm{1}_{\Lambda_{n,k}}$ for $k\leq n \in \mathbb{N}$.
Furthermore, let $c_{\epsilon,\psi}$ be defined as in \eqref{c(n)}
for $0<\epsilon<1/4$ and $\psi\in\Psi$. Then 
\begin{align*}
\mathbb{P}\left(\left|p_{n}\cdot n-\sum_{k=1}^{n}A_{n,k}\right|\geq c_{\epsilon,\psi}\left(p_{n}\cdot n,n\right)\text{ infinitely often}\right)=0.
\end{align*}

\end{lem}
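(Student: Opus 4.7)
I would combine Bernstein's inequality with the Borel--Cantelli lemma. For each fixed $n\in\mathbb{N}$ the indicators $A_{n,1},\ldots,A_{n,n}$ are i.i.d.\ Bernoulli with parameter $p_n$, bounded by $1$ with $\mathbb{V}(A_{n,k})=p_n(1-p_n)\leq p_n$. Setting $Z_n\coloneqq\sum_{k=1}^{n}A_{n,k}$, Lemma \ref{Bernstein} applied to $Y_k\coloneqq A_{n,k}$ gives, for every $t>0$,
\begin{equation*}
\mathbb{P}\bigl(|Z_n-p_n n|\geq t\bigr)\leq 2\exp\!\left(-\frac{t^{2}}{2\,n p_n+(2/3)\,t}\right).
\end{equation*}

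I would then set $L_n\coloneqq\log\psi(\lfloor\log n\rfloor)$ and substitute $t=c_{\epsilon,\psi}(p_n n,n)=8(\max\{p_n n,L_n\})^{1/2+\epsilon}L_n^{1/2-\epsilon}$, evaluating the exponent by case analysis. If $p_n n\leq L_n$, then $t=8L_n$ and the denominator is at most $\tfrac{22}{3}L_n$, so the exponent is at most $-\tfrac{96}{11}L_n$. If $p_n n>L_n$, the elementary inequality $(p_n n)^{1/2+\epsilon}L_n^{1/2-\epsilon}\leq p_n n$ bounds the denominator by $\tfrac{22}{3}\,p_n n$, while $t^{2}/(p_n n)=64(p_n n)^{2\epsilon}L_n^{1-2\epsilon}\geq 64 L_n$ since $p_n n\geq L_n$, giving the same bound $-\tfrac{96}{11}L_n$ on the exponent. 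In both cases,
\begin{equation*}
\mathbb{P}\bigl(|Z_n-p_n n|\geq c_{\epsilon,\psi}(p_n n,n)\bigr)\leq 2\,\psi(\lfloor\log n\rfloor)^{-\kappa}
\end{equation*}
for the explicit constant $\kappa=96/11>8$.

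For the Borel--Cantelli step the main obstacle is that the naive sum $\sum_n\psi(\lfloor\log n\rfloor)^{-\kappa}$ is of order $\sum_m e^{m}\psi(m)^{-\kappa}$, which need not converge for arbitrary $\psi\in\Psi$. My plan is first to apply Borel--Cantelli along the sparse subsequence $n_m\coloneqq\lceil e^{m}\rceil$, for which $\lfloor\log n_m\rfloor=m$ and the bound reduces to $\sum_m 2\psi(m)^{-\kappa}<\infty$; this uses that $\psi\in\Psi$ forces $\psi(m)\to\infty$, so eventually $\psi(m)^{-\kappa}\leq\psi(m)^{-1}$. The transfer from this subsequence to all $n$ is the technically delicate point, since $A_{n,k}$ genuinely depends on $n$ through $p_n$ and no direct monotone coupling is available across $n$. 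I would handle it by combining a union bound over each block $B_m\coloneqq\{n:\lfloor\log n\rfloor=m\}$ with Lemma \ref{log gamma log tilde gamma}, which replaces $\psi$ by a suitable $\omega\in\Psi$ on a coarser logarithmic scale so that the factor $|B_m|\asymp e^m$ can be absorbed into a summable series, yielding the desired conclusion $\mathbb{P}\bigl(|p_n n-\sum_{k=1}^n A_{n,k}|\geq c_{\epsilon,\psi}(p_n n,n)\text{ i.o.}\bigr)=0$.
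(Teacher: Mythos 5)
Your pointwise estimate is correct: applying Lemma~\ref{Bernstein} to the i.i.d.\ Bernoulli indicators $A_{n,1},\ldots,A_{n,n}$ and plugging in $t = c_{\epsilon,\psi}(p_n n,n)$ does yield
\begin{equation*}
\mathbb{P}\bigl(|Z_n - p_n n|\geq c_{\epsilon,\psi}(p_n n,n)\bigr) \leq 2\,\psi(\lfloor\log n\rfloor)^{-96/11},
\end{equation*}
and you rightly observe that this is not summable over $n$ in general, since $\sum_m e^m\psi(m)^{-\kappa}$ can diverge even for $\kappa>8$ (take $\psi(m)=m^2$). That observation is the crux of the lemma: the real work is the Borel--Cantelli step, and your plan for it does not close the gap.

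The transfer you sketch cannot work as stated. A union bound over the block $B_m=\{n:\lfloor\log n\rfloor=m\}$ still costs a factor $|B_m|\asymp e^m$, so you would again need $\sum_m e^m\psi(m)^{-\kappa}<\infty$, which is exactly what you just noted you do not have. Lemma~\ref{log gamma log tilde gamma} cannot rescue this: it only shows that for $\psi\in\Psi$ and a change of logarithm base there is $\omega\in\Psi$ with $\omega(\lfloor\log_b n\rfloor)\leq\psi(\lfloor\log_a n\rfloor)$; it is a base-change device, not a mechanism for absorbing an exponential factor. Moreover, your remark that \emph{no direct monotone coupling is available across $n$} is incorrect and, in fact, misses the structural feature the lemma's setup provides: by construction $A_{n,k}=\mathbbm{1}\{\pi_k\leq p_n\}$, so for each fixed $k$ the indicators for different $n$ are pointwise monotone in $p_n$. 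This coupling is precisely what the paper exploits. The actual mechanism is: on each dyadic block $I_m=[2^m,2^{m+1})$ one discretizes the thresholds into dyadic levels $l/2^{m+1}$ and forms a \emph{single} triangular Bernoulli array $B_{m,k}^{l}=\mathbbm{1}\{\pi_k\leq l/2^{m+1}\}$ per level $l$; the monotone coupling sandwiches every $A_{j,k}$ with $j\in I_m$ between consecutive $B_{m,k}^{l}$'s. The maximal form of Lemma~\ref{Bernstein} is then applied once per level $l$, and the key point is that although there are $\approx 2^{m+1}$ levels, the per-level bound $2\exp(-l^{2\epsilon}\kappa_m^{1-2\epsilon})$ decays so fast in $l$ (starting from $l=\kappa_m\approx\log\psi(m)$) that the sum over $l$ is $O(\exp(-\kappa_m))$, not $O(2^m\exp(-\kappa_m))$. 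That is where the exponential cost of the block size is eliminated, and it is a genuinely different, and essential, idea that your proposal does not contain. A separate easy argument (the $\bar B_{m,k}$'s) handles the small-$p_j$ regime where the discretization would be too coarse. In short, your probability estimate is fine but your Borel--Cantelli scheme needs to be replaced by the block-plus-dyadic-level maximal-inequality argument.
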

\begin{proof} For $n\in \mathbb{N}$, we define $I_{n}\coloneqq\left[2^{n},2^{n+1}-1\right]\cap \mathbb{N}$,
$\kappa_{n}\coloneqq\left\lfloor \min_{k\in I_{n}}\log\psi\left(\left\lfloor \log k\right\rfloor \right)\right\rfloor $,
and $\rho:\mathbb{N}\rightarrow\mathbb{N}$ with $\rho\left(n\right)\coloneqq\left\lfloor \log_{2}n\right\rfloor $.
By this definition we have that $j\in I_{\rho\left(j\right)}=\left[2^{\rho\left(j\right)},2^{\rho\left(j\right)+1}-1\right]$.
In the following we will separately show for $c=c_{\epsilon,\psi}$ that 
\begin{align}
\mathbb{P}\left(\bigcap_{n\in\mathbb{N}}\bigcup_{\left\{ j\geq n\colon p_{j}\geq\kappa_{\rho\left(j\right)}/j\right\} }\left\{ \left|p_{j}\cdot j-\sum_{k=1}^{j}A_{j,k}\right|\geq c\left(p_{j}\cdot j,j\right)\right\} \right)=0\label{qn}
\end{align}
and 
\begin{align}
\mathbb{P}\left(\bigcap_{n\in\mathbb{N}}\bigcup_{\left\{ j\geq n\colon p_{j}\leq\kappa_{\rho\left(j\right)}/j\right\} }\left\{ \left|p_{j}\cdot j-\sum_{k=1}^{j}A_{j,k}\right|\geq c\left(p_{j}\cdot j,j\right)\right\} \right)=0.\label{bar qn}
\end{align}
Combining these two observations would then yield the statement of the lemma.

Since $c\left(p_j\cdot j,j\right)\geq \kappa_n$ for $j\in I_n$ we immediately have for all $j\in I_n$ in case that $\kappa_n\geq 2^{n+1}$ that 
\begin{align*}
\left\{\left|p_{j}\cdot j-\sum_{k=1}^{j}A_{j,k}\right|\geq c_{\epsilon,\psi}\left(p_{j}\cdot j,j\right) \right\}=\varnothing.
\end{align*}
 Hence, in order to show \eqref{qn} we only have to consider those $n\in\mathbb{N}$ for which $\kappa_n<2^{n+1}$. For $k\leq2^{n+1}$ and $\kappa_{n}\leq l\leq2^{n+1}$ set $B_{n,k}^l\coloneqq \mathbbm{1}_{\pi_k^{-1}\left(\left[0,l/2^{n+1}\right]\right)}$. 
The following properties are obviously fulfilled.
\begin{enumerate}[(a)]
\item $B_{n,k}^{l}$ is Bernoulli distributed with success probability $l/2^{n+1}$, 
\item \label{2} if $l/2^{n+1}\leq p_{j}<\left(l+1\right)/2^{n+1}$ for
 $j\in I_{n}$, then $B_{n,k}^{l}\leq A_{j,k}<B_{n,k}^{l+1}$
for all $k\leq j$, and 
\item The random variables $B_{n,1}^{l},\ldots,B_{n,2^{n+1}}^{l}$ are independent for
all fixed $l,n\in\mathbb{N}$. 
\end{enumerate}

Since $B_{n,k}^{l}$ is Bernoulli
distributed, we have that 
\begin{align*}
\mathbb{V}\left(\sum_{k=1}^{j}B_{n,k}^{l}\right) & =j\cdot\frac{l}{2^{n+1}}\left(1-\frac{l}{2^{n+1}}\right)\leq j\cdot\frac{l}{2^{n+1}}<l.
\end{align*}
Furthermore, $\left|B_{n,k}^{l}-\mathbb{E}\left(B_{n,k}^{l}\right)\right|\leq1$.
With these considerations we can apply Lemma \ref{Bernstein} to the
sequence $B_{n,1}^{l},\ldots,B_{n,2^{n+1}}^{l}$ and estimate

\begin{align}
\mathbb{P}\left(\max_{j\in I_{n}}\left|\frac{l}{2^{n+1}}\cdot j-\sum_{k=1}^{j}B_{n,k}^{l}\right|\geq\frac{3}{8}{ \min_{r\in I_n}}c\left(l,{ r}\right)\right)
 & \leq\mathbb{P}\left(\max_{j\in I_{n}}\left|\frac{l}{2^{n+1}}\cdot j-\sum_{k=1}^{j}B_{n,k}^{l}\right|\geq3l^{1/2+\epsilon}\kappa_n^{1/2-\epsilon}\right)\notag\\
 & <2\exp\left(-\frac{\left(3l^{1/2+\epsilon}\kappa_n^{1/2-\epsilon}\right)^{2}}{2l+\frac{2}{3}\cdot3l^{1/2+\epsilon}\kappa_n^{1/2-\epsilon}}\right).\label{kn l}
\end{align}
Since $l\geq\kappa_{n}$, we have that 
\begin{align*}
2l+\frac{2}{3}\cdot3l^{1/2+\epsilon}\kappa_n^{1/2-\epsilon}\leq4l
\end{align*}
and hence we can conclude from \eqref{kn l} that 
\begin{align*}
\mathbb{P}\left(\max_{j\in I_{n}}\left|\frac{l}{2^{n+1}}\cdot j-\sum_{k=1}^{j}B_{n,k}^{l}\right|\geq \frac{3}{8} \min_{r\in I_n}c\left(l,{ r}\right)\right)
  \leq2\exp\left(-\frac{9l^{1+2\epsilon}\kappa_n^{1-2\epsilon}}{4l}\right)
  <2\exp\left(-l^{2\epsilon}\kappa_n^{1-2\epsilon}\right).
\end{align*}
Furthermore, we estimate 
\begin{align}
\sum_{l=\kappa_n}^{2^{n+1}}\mathbb{P}&\left(\max_{j\in I_{n}}\left|\frac{l}{2^{n+1}}\cdot j-\sum_{k=1}^{j}B_{n,k}^{l}\right|>\frac{3}{8}{ \min_{r\in I_n}}c\left(l,{ r}\right)\right)\notag\\
  &<\sum_{l=\kappa_n}^{2^{n+1}}2\exp\left(-l^{2\epsilon}\kappa_n^{1-2\epsilon}\right)
  =2\exp\left(-\kappa_n\right)\sum_{l=\kappa_n}^{2^{n+1}}\exp\left(\kappa_n^{1-2\epsilon}\left(\kappa_n^{2\epsilon}-l^{2\epsilon}\right)\right).\label{qn 2^n}
\end{align}
Since $\epsilon<1/4$, we have that $x^{2\epsilon}$ is concave as
a function in $x$. Thus, we can estimate 
\begin{align}
\kappa_n^{1-2\epsilon}\left(l^{2\epsilon}-\kappa_n^{2\epsilon}\right) & \geq\kappa_n^{1-2\epsilon}\cdot2\epsilon\cdot l^{2\epsilon-1}\left(l-\kappa_n\right)\geq2\epsilon\left(l-\kappa_n\right)^{2\epsilon}\label{kappa epsilon}
\end{align}
if $2\leq l-\kappa_n=:h$. Define $H\coloneqq H_{\epsilon}\coloneqq\min\left\{ k\in\mathbb{N}_{\geq2}\colon\epsilon k^{2\epsilon}\geq\log k\right\} $.
Note that for all $k\geq H$ we also have $\epsilon k^{2\epsilon}\geq\log k$.
Using \eqref{kappa epsilon} and the definition of $H$ we obtain
\begin{align}
\sum_{l=\kappa_n+H}^{2^{n+1}}\exp\left(\kappa_n^{1-2\epsilon}\left(\kappa_n^{2\epsilon}-l^{2\epsilon}\right)\right) & \leq\sum_{l=\kappa_n+H}^{2^{n+1}}\exp\left(-2\epsilon\left(l-\kappa_n\right)^{2\epsilon}\right)\notag
  \leq\sum_{h=H}^{2^{n+1}-\kappa_n}\exp\left(-2\epsilon h^{2\epsilon}\right)\notag\\&\leq\sum_{h=H}^{2^{n+1}-\kappa_n}\exp\left(-2\log h\right) <\sum_{h=H}^{\infty}\exp\left(-2\log h\right)\leq\frac{\pi^{2}}{6}.\label{pi/4}
\end{align}

Since $l\geq\kappa_n$, every summand $\exp\left(\kappa_n^{1-2\epsilon}\left(\kappa_n^{2\epsilon}-l^{2\epsilon}\right)\right)$
is less than or equal to $1$ and hence we have 
\begin{align}
\sum_{l=\kappa_n}^{\kappa_n+H-1}\exp\left(\kappa_n^{1-2\epsilon}\left(\kappa_n^{2\epsilon}-l^{2\epsilon}\right)\right)\leq H.\label{sum H}
\end{align}

Combining \eqref{pi/4} and \eqref{sum H} with \eqref{qn 2^n} and
applying the definition of $\kappa_n$ yields 
\begin{align}
\sum_{l=\kappa_{n}}^{2^{n+1}}\mathbb{P}\left(\max_{j\in I_{n}}\left|\frac{l}{2^{n+1}}\cdot j-\sum_{k=1}^{j}B_{n,k}^{l}\right|>\frac{3}{4}{ \min_{r\in I_n}}c\left(l,{ r}\right)\right)
 & <2\left(H+\frac{\pi^{2}}{6}\right)\exp\left(-\kappa_n\right).\label{H pi exp}
\end{align}
We can conclude from Lemma \ref{log gamma log tilde gamma} that there exists $\omega\in\Psi$ such that 
\begin{align*}
\kappa_n&
\geq \min_{j\in I_n}\log\psi\left(\left\lfloor\log j\right\rfloor\right)-1=\min_{j\in I_n}\log\frac{\psi\left(\left\lfloor\log j\right\rfloor\right)}{\mathrm{e}}\geq\min_{j\in I_n}\log\omega\left(\left\lfloor\log_2 j\right\rfloor\right)=\log\omega\left(n\right).
\end{align*}

Thus, with \eqref{H pi exp} we obtain 
\begin{align*}
\sum_{n=1}^{\infty}\sum_{l=\kappa_{n}}^{2^{n+1}}\mathbb{P}\left(\max_{j\in I_{n}}\left|\frac{l}{2^{n+1}}\cdot j-\sum_{k=1}^{j}B_{n,k}^{l}\right|>\frac{3}{8}\min_{r\in I_n}c\left(l,{ r}\right)\right)
 & <\sum_{n=1}^{\infty}2\left(H+\frac{\pi^{2}}{6}\right)\frac{1}{\omega\left(n\right)}<\infty.
\end{align*}
Hence, we can apply the Borel-Cantelli Lemma and obtain for 
\[
C_{n}\coloneqq\left\{ \left(l,m\right)\in\mathbb{N}\times\mathbb{N}\colon m\geq n,\kappa_{m}\leq l\leq2^{m+1}\right\} 
\]
that 
\[
\mathbb{P}\left(\bigcap_{n\in\mathbb{N}}\bigcup_{\left(l,m\right)\in C_{n}}\left\{ \max_{j\in I_{m}}\left|\frac{l}{2^{m+1}}\cdot j-\sum_{k=1}^{j}B_{m,k}^{l}\right|>\frac{3}{8} \min_{r\in I_n}c\left(l,{ r}\right)\right\} \right)=0.
\]
Since 
\[
\max_{j\in I_{m}}\left|\frac{l}{2^{m+1}}\cdot j-\sum_{k=1}^{j}B_{m,k}^{l}\right|\leq\frac{3}{8}\min_{r\in I_m}c\left(l,{ r}\right)
\]
implies 
\[
\left|\frac{l}{2^{\rho\left(j\right)+1}}\cdot j-\sum_{k=1}^{j}B_{\rho\left(j\right),k}^{l}\right|\leq\frac{3}{8}\min_{r\in I_{\rho\left(j\right)}}c\left(l,r\right),
\]
for $j\in I_{m}$, we have with 
\[
D_{n}\coloneqq\left\{ \left(l,j\right)\in\mathbb{N}\times\mathbb{N}\colon j\geq n,\kappa_{\rho\left(j\right)}\leq l\leq2^{\rho\left(j\right)+1}\right\} 
\]
that 
\begin{align}
\mathbb{P}\left(\bigcap_{n\in\mathbb{N}}\bigcup_{\left(l,j\right)\in D_{n}}\left\{ \left|\frac{l}{2^{\rho\left(j\right)+1}}\cdot j-\sum_{k=1}^{j}B_{\rho\left(j\right),k}^{l}\right|>\frac{3}{8}\min_{r\in I_{\rho\left(j\right)}}c\left(l,r\right)\right\} \right)=0.\label{l c}
\end{align}

For $j\in I_{n}$ and $p_{j}\geq\kappa_{\rho\left(j\right)}/j$, we
can find $l\in\left\{ \kappa_{n},\ldots,2^{n+1}\right\} $ such that
\begin{align}
\frac{l}{2^{n+1}}\leq p_{j}<\frac{l+1}{2^{n+1}}.\label{l pj}
\end{align}
Let us assume this inequality and 
\begin{align}
\frac{l}{2^{n+1}}\cdot j-\sum_{k=1}^{j}B_{n,k}^{l}\leq\frac{3}{8}\min_{r\in I_n}c\left(l,r\right)\label{l Bn 1}
\end{align}
holds. Then it follows by the definition of $B_{n,k}^{l}$  
for $j\in I_n$ that 
\begin{align}
\frac{l}{2^{n+1}}\cdot j-\sum_{k=1}^{j}A_{j,k}\leq\frac{3}{8}\min_{r\in I_n}c\left(l,r\right)\leq\frac{3}{8}c\left(l,j\right).\label{A c}
\end{align}
For $j\in I_n$, we can conclude from the second inequality of \eqref{l pj} that   $p_{j}\cdot j-l/2^{n+1}\cdot j\leq1$,
and from the first inequality of \eqref{l pj} that $l\leq p_j\cdot 2^{n+1}\leq 2\cdot p_j\cdot j$. Thus,
\eqref{A c} implies 
\begin{align*}
p_{j}\cdot j-\sum_{k=1}^{j}A_{j,k} & <\frac{3}{4}c\left(p_{j}\cdot j,j\right)+1
 <c\left(p_{j}\cdot j,j\right),
\end{align*}
for $j \in I_n$ with $n\in \mathbb{N}$ sufficiently large. Analogously to the situation above we
get for $j\in I_n$ and under the assumption that \eqref{l pj} and 
\begin{align}
\sum_{k=1}^{j}B_{n,k}^{l+1}-\frac{l+1}{2^{n+1}}\cdot j & <\frac{3}{4}\min_{r\in I_n}c\left(l+1,r\right)\leq\frac{3}{4}c\left(l+1,j\right)\label{l Bn 2}
\end{align}
hold that 
\begin{align}
\sum_{k=1}^{j}A_{j,k}-\frac{l+1}{2^{n+1}}\cdot j & <\frac{3}{4}c\left(l+1,j\right).\label{eq: sum Ajk}
\end{align}
We conclude from the first inequality of \eqref{l pj} that 
$\left(l+1\right)/2^{n+1}\cdot j-p_{j}\cdot j\leq \left(l+1\right)/2^{n+1}\cdot j-l/2^{n+1}\cdot j\leq1$.
The first inequality of \eqref{l pj} also gives $l+1\leq 2\cdot p_j\cdot j+1$.
Thus, \eqref{eq: sum Ajk} implies
\begin{align*}
\sum_{k=1}^{j}A_{j,k}-p_{j}\cdot j & <\frac{3}{4}c\left(p_{j}\cdot j,j\right)+2
 <c\left(p_{j}\cdot j,j\right),
\end{align*}
for $j \in I_n$ with $n\in \mathbb{N}$ sufficiently large. Hence, we have proved that under condition
\eqref{l pj} the inequalities \eqref{l Bn 1} and \eqref{l Bn 2}
imply 
\begin{align*}
\left|\sum_{k=1}^{j}A_{j,k}-p_{j}\cdot j\right|<c\left(p_{j}\cdot j,j\right).
\end{align*}
This combined with \eqref{l c} proves \eqref{qn}.

In the last steps we prove \eqref{bar qn}. In order to do so we define
the triangular array of random variables $\left(\bar{B}_{n,k}\right)$
with $k,n\in\mathbb{N}$, $k\leq2^{n+1}$ by $\overline{B}_{n,k}\coloneqq \mathbbm{1}_{\pi_k^{-1}\left(\left[0,\kappa_n/2^n\right]\right)}$.
It immediately follows that
\begin{enumerate}[(a)]
\item each $\bar{B}_{n,k}$ is Bernoulli distributed with success probability
$q_{n}\coloneqq\kappa_{n}/2^{n}$, 
\item \label{bar B 2} if $p_{j}\leq q_{n}$ for some $j\in I_{n}$, then
$\bar{B}_{n,k}\geq A_{j,k}$ for all $k\leq j$, and 
\item The random variables $\bar{B}_{n,1},\ldots,\bar{B}_{n,2^{n+1}}$ are independent
for all fixed $n\in\mathbb{N}$. 
\end{enumerate}
Since $\mathbb{E}\left(\bar{B}_{n,1}\right)=q_{n}$ we have
that 
\begin{align*}
\frac{3}{8}\cdot c\left(\kappa_{n},2^{n}\right)\geq\frac{3}{2}\cdot q_{n}\cdot2^{n+1}>\frac{3}{2}\cdot\mathbb{E}\left(\sum_{k=1}^{2^{n+1}-1}\bar{B}_{n,k}\right).
\end{align*}
Furthermore $\bar{B}_{n,k}\leq1$ and we can apply Lemma \ref{Bernstein 1}.
This yields 
\begin{align*}
\MoveEqLeft\mathbb{P}\left(\max_{j\in I_{n}}\left|q_{n}\cdot j-\sum_{k=1}^{j}\bar{B}_{n,k}\right|\geq\frac{3}{8}c\left(q_{n}\cdot2^{n},2^{n}\right)\right)\\
 & \leq\mathbb{P}\left(\max_{j\in I_{n}}\left|q_{n}\cdot j-\sum_{k=1}^{j}\bar{B}_{n,k}\right|\geq\frac{3}{2}\cdot\mathbb{E}\left(\sum_{k=1}^{2^{n+1}-1}\bar{B}_{n,k}\right)\right)\\
 & \leq2\exp\left(-\frac{3\left(\frac{3}{2}\right)^{2}}{6+2\cdot\frac{3}{2}}\cdot\mathbb{E}\left(\sum_{k=1}^{2^{n+1}-1}\bar{B}_{n,k}\right)\right)=2\exp\left(-\frac{3}{4}\cdot\mathbb{E}\left(\sum_{k=1}^{2^{n+1}-1}\bar{B}_{n,k}\right)\right)\\
 & <2\exp\left(-\frac{3}{4}\cdot q_{n}\cdot\left(2^{n+1}-1\right)\right)<2\exp\left(-\kappa_{n}\right),
\end{align*}
for all $n\in\mathbb{N}$. By the definition of $\kappa_{n}$ it follows that 
\begin{align*}
\mathbb{P}\left(\max_{j\in I_{n}}\left|q_{n}\cdot j-\sum_{k=1}^{j}\bar{B}_{n,k}\right|\geq\frac{3}{8}c\left(q_{n}\cdot2^{n},2^{n}\right)\right)
 & <2\exp\left(-\left\lfloor \min_{j\in I_{n}}\log\psi\left(\left\lfloor \log j\right\rfloor \right)\right\rfloor\right)\\
 & \leq\frac{2}{\min_{j\in I_{n}}\psi\left(\left\lfloor \log j\right\rfloor \right)}.
\end{align*}

With the considerations from above we have that $\psi\in\Psi$ implies
that $\tilde{\psi}:\mathbb{N}\rightarrow\mathbb{R}^{+}$ lies in $\Psi$
where $\tilde{\psi}\left(n\right)\coloneqq\min_{j\in I_{n}}\psi\left(\left\lfloor \log j\right\rfloor \right)$.

With these considerations we can apply the Borel-Cantelli Lemma and
obtain 
\begin{align}
\mathbb{P}\left(\max_{j\in I_{n}}\left|q_{n}\cdot j-\sum_{k=1}^{j}\bar{B}_{n,k}\right|\geq\frac{3}{4}c\left(q_{n}\cdot2^{n},2^{n}\right)\text{ infinitely often}\right)=0.\label{BC bar B}
\end{align}
For $p_{j}\leq q_{n}$ with $j\in I_{n}$ and thus in particular for
$p_{j}\leq\kappa_{\rho\left(j\right)}/j$ we have that $\bar{B}_{j,k}\geq A_{j,k}$
for $j\leq k$. Hence, 
$\sum_{k=1}^{j}\bar{B}_{j,k}-q_{n}\cdot j<3\cdot q_{n}\cdot2^{n}
$
implies 
$\sum_{k=1}^{j}A_{j,k}-q_{n}\cdot j <3\cdot q_{n}\cdot2^{n}$.
Adding $q_{n}\cdot j-p_{j}\cdot j$ 
\begin{align*}
\sum_{k=1}^{j}A_{j,k}-p_{j}\cdot j & <3\cdot q_{n}\cdot2^{n}+q_{n}\cdot j-p_{j}\cdot j
\leq4\cdot q_{n}\cdot j
\leq4\cdot\log\psi\left(\left\lfloor \log j\right\rfloor \right)
\end{align*}
Since $p_{j}\leq\kappa_{n}/2^{n}$ for $j\in I_{n}$, we have that
$p_{j}\cdot j\leq\log\psi\left(\left\lfloor \log j\right\rfloor \right)$
and thus $c\left(p_{j}\cdot j,j\right)=4\cdot\log\psi\left(\left\lfloor \log j\right\rfloor \right)$.
Hence, 
$\sum_{k=1}^{j}A_{j,k}-p_{j}\cdot j <c\left(p_{j}\cdot j,j\right)$.
Combining this with \eqref{BC bar B} and noting that on the other
hand we have for $p_{j}\leq q_{n}$ and $j\in I_{n}$ that $p_{j}\cdot j-\sum_{k=1}^{j}A_{j,k}\leq q_{n}\cdot2^{n+1}\leq c\left(p_{j}\cdot j,j\right)$
yields \eqref{bar qn}. \end{proof} 

\begin{proof} [Proof of Theorem \ref{Sbn}] Since $\left(t_{n}\right)$
fulfills condition \eqref{cond ab}, { it follows that $F\left(t_n\right)>0$, for $n$ sufficiently large and} we can apply Theorem \ref{Thm: Sn* allg}
which yields
\begin{align}
T_{n}^{t_{n}} & \sim n\int_{0}^{t_{n}}x\mathrm{d}F\left(x\right)\text{ a.s.}\label{Tn asymp}
\end{align}
We want to apply Lemma \ref{Philipp} to the random variables 
\begin{align*}
\left(\1_{\left\{ X_{1}>t_{n}\right\} },\ldots,\1_{\left\{ X_{n}>t_{n}\right\} }\right)\text{ and }\left(\mathbbm{1}_{\left\{ X_{1}\geq t_{n}\right\} },\ldots,\mathbbm{1}_{\left\{ X_{n}\geq t_{n}\right\} }\right).
\end{align*}
These random variables are defined on a different probability space, however the triangular scheme $\left(\1_{\left\{ X_{k}>t_{n}\right\}}\right)_{n\in\mathbb{N},k\leq n}$ 
is identically distributed to $\left(A_{n,k}\right)_{n\in\mathbb{N},k\leq n}$ defined in Lemma \ref{Philipp} if we set $p_n\coloneqq \mathbb{P}\left(X_{k}>t_{n}\right)$.
To see this we note that $A_{n_1,k_1},\ldots, A_{n_i,k_i}$ are independent by construction if all $k_j$, $j=1,\ldots,i$ are different.
The same is true for $\1_{\left\{ X_{k_1}>t_{n_1}\right\}},\ldots, \1_{\left\{ X_{k_i}>t_{n_i}\right\}}$ since $X_{k_1},\ldots, X_{k_i}$ are independent. 
To calculate the finite dimensional distribution for $k_1=\ldots=k_i$ we only have to consider 
$\mathbb{P}\left(A_{n,1}>0\right)$ which gives a probability strictly between $0$ and $1$. 
\begin{align*}
 \MoveEqLeft\mathbb{P}\left(A_{n_1,1}>0,\ldots, A_{n_i,1}>0\right)\\
 &=\mathbb{P}\left(x\in\left[0,p_{n_1}\right]\cap\ldots\cap\left[0,p_{n_i}\right]\right)
 =\mathbb{P}\left(x\in\left[0,\min_{1\leq j\leq i}p_{n_j}\right]\right)
 =\min_{1\leq j\leq i}p_{n_j}.
\end{align*}
On the other hand we have that
\begin{align*}
\mathbb{P}\left(\1_{\left\{ X_{1}>t_{n_1}\right\}}>0,\ldots, \1_{\left\{ X_{1}>t_{n_i}\right\}}>0\right)
 &=\mathbb{P}\left(X_1>\max_{1\leq j\leq i} t_{n_j}\right)=\min_{1\leq j\leq i}p_{n_j}.
 \end{align*}
Furthermore,
\begin{align*}
 \MoveEqLeft\mathbb{P}\left(\left|p_{n}\cdot n-\sum_{k=1}^{n}A_{n,k}\right|\geq c_{\epsilon,\psi}\left(p_{n}\cdot n,n\right)\text{ infinitely often}\right)\\
 &=\lim_{k\to\infty}\mathbb{P}\left(\bigcup_{n\geq k}\left\{\left|p_{n}\cdot n-\sum_{k=1}^{n}A_{n,k}\right|\geq c_{\epsilon,\psi}\left(p_{n}\cdot n,n\right)\right\}\right)
\end{align*}
and since 
\begin{align*}
\MoveEqLeft\mathbb{P}\left(\bigcup_{n\geq k}\left\{\left|p_{n}\cdot n-\sum_{k=1}^{n}A_{n,k}\right|\geq c_{\epsilon,\psi}\left(p_{n}\cdot n,n\right)\right\}\right)\\
&=\mathbb{P}\left(\bigcup_{n\geq k}\left\{\left|p_{n}\cdot n-\sum_{k=1}^{n}\mathbbm{1}_{\left\{X_k>t_n\right\}}\right|\geq c_{\epsilon,\psi}\left(p_{n}\cdot n,n\right)\right\}\right),
\end{align*}
for all $k\in\mathbb{N}$, it also follows that 
\begin{align*}
\mathbb{P}\left(\left|p_{n}\cdot n-\sum_{k=1}^{n}\mathbbm{1}_{\left\{X_k>t_n\right\}}\right|\geq c_{\epsilon,\psi}\left(p_{n}\cdot n,n\right)\text{ infinitely often}\right)=0.
\end{align*}
The argumentation for $\left(\1_{\left\{ X_{k}\geq t_{n}\right\}}\right)$ follows analogously.

We have that the success probability for $\mathbbm{1}_{\left\{ X_{1}>t_{n}\right\} }$
is $a_{n}^{-}/n$ and the success probability for $\mathbbm{1}_{\left\{ X_{1}\geq t_{n}\right\} }$
is $a_{n}^{+}/n$. Thus,
we have a.e.\ 
\begin{align}
a_{n}^{+}-c\left(a_{n}^{+},n\right)
 & \leq\#\left\{ i\leq n\colon X_{i}\geq t_{n}\right\}
 \leq a_{n}^{+}+c\left(a_{n}^{+},n\right)\text{ eventually}\label{a c -}
\end{align}
and a.e.\ 
\begin{align}
a_{n}^{-}-c\left(a_{n}^{-},n\right) & \leq\#\left\{ i\leq n\colon X_{i}>t_{n}\right\}
 \leq a_{n}^{-}+c\left(a_{n}^{-},n\right)\text{ eventually.}\label{a c +}
\end{align}
We can conclude from \eqref{a c +} that a.e. 
\begin{align}
S_{n}^{a_{n}^{-}+c\left(a_{n}^{-},n\right)}\leq T_{n}^{t_{n}}\text{ eventually.}\label{San Tn}
\end{align}

Combining \eqref{a c -} and \eqref{a c +} we have that a.e. 
\begin{align}
a_{n}^{+}-a_{n}^{-}-c\left(a_{n}^{+},n\right)-c\left(a_{n}^{-},n\right) & \leq\#\left\{ i\leq n\colon X_{i}=t_{n}\right\} \text{ eventually.}\label{a c +-}
\end{align}
We first consider the case that $b_{n}\leq a_{n}^{+}-c\left(a_{n}^{+},n\right)$.
Then 
\begin{align*}
b_{n}-a_{n}^{-}-c\left(a_{n}^{-},n\right)\leq a_{n}^{+}-a_{n}^{-}-c\left(a_{n}^{+},n\right)-c\left(a_{n}^{-},n\right).
\end{align*}
Hence, \eqref{an1} combined with \eqref{San Tn} and \eqref{a c +-}
yields a.e. 
\begin{align}
S_{n}^{b_{n}}\leq T_{n}^{t_{n}}-\left(b_{n}-a_{n}^{-}-c\left(a_{n}^{-},n\right)\right)t_{n}\text{ eventually.}\label{Sbn <}
\end{align}
On the other hand since $X_{n}^{t_{n}}\leq t_{n}$ it follows by
\eqref{a c +} that a.e. 
\begin{align*}
T_{n}^{t_{n}}-2c\left(a_{n}^{-},n\right)t_{n}\leq S_{n}^{a_{n}^{-}+c\left(a_{n}^{-},n\right)}\text{ eventually}
\end{align*}
and trimming the sum by $b_{n}-\left(a_{n}^{-}+c\left(a_{n}^{-},n\right)\right)$
more summands yields a.e. 
\begin{align}
\MoveEqLeft T_{n}^{t_{n}}-\left(b_{n}-a_{n}^{-}-c\left(a_{n}^{-},n\right)\right)t_{n}-2c\left(a_{n}^{-},n\right)t_{n}\notag\\
 & =T_{n}^{t_{n}}-\left(b_{n}-a_{n}^{-}+c\left(a_{n}^{-},n\right)\right)t_{n}\notag\\
 & \leq S_{n}^{b_{n}}\text{ eventually.}\label{Sbn >}
\end{align}
Combining \eqref{Sbn <} and \eqref{Sbn >} yields a.e. 
\begin{align*}
\left|S_{n}^{b_{n}}-T_{n}^{t_{n}}+\left(b_{n}-a_{n}^{-}\right)t_{n}\right| & \leq c\left(a_{n}^{-},n\right)\cdot t_{n}\leq\gamma_{n}\cdot t_{n}\text{ eventually.}
\end{align*}
Let us now consider the case $b_{n}>a_{n}^{+}-c\left(a_{n}^{+},n\right)$.
This implies 
\begin{align*}
b_{n}-a_{n}^{-}-c\left(a_{n}^{-},n\right)>a_{n}^{+}-a_{n}^{-}-c\left(a_{n}^{+},n\right)-c\left(a_{n}^{-},n\right)
\end{align*}
and with \eqref{San Tn} and \eqref{a c +-} it follows that a.e.
\begin{align}
S_{n}^{b_{n}}\leq T_{n}^{t_{n}}-\left(a_{n}^{+}-a_{n}^{-}-c\left(a_{n}^{+},n\right)-c\left(a_{n}^{-},n\right)\right)t_{n}\text{ eventually.}\label{Sbn <1}
\end{align}
On the other hand we have that \eqref{Sbn >} still holds if $b_{n}>a_{n}^{+}-c\left(a_{n}^{+},n\right)$.
Considering 
\begin{align*}
\left(b_{n}-a_{n}^{-}\right)-\left(a_{n}^{+}-a_{n}^{-}-c\left(a_{n}^{-},n\right)-c\left(a_{n}^{+},n\right)\right)
 =b_{n}-a_{n}^{+}+c\left(a_{n}^{+},n\right)+c\left(a_{n}^{-},n\right)
\end{align*}
and combining \eqref{Sbn >} and \eqref{Sbn <1} yields that a.e.
for $n$ sufficiently large 
\begin{align*}
\left|S_{n}^{b_{n}}-T_{n}^{t_{n}}+\left(b_{n}-a_{n}^{-}\right)t_{n}\right| & \leq b_{n}-a_{n}^{+}+c\left(a_{n}^{+},n\right)+c\left(a_{n}^{-},n\right)
\end{align*}
 and thus
\[
\left|S_{n}^{b_{n}}-T_{n}^{t_{n}}+\left(b_{n}-a_{n}^{-}\right)t_{n}\right|=\mathcal{O}\left(\gamma_{n}\cdot t_{n}\right)\text{\,{a.s.}}
\]

Combining this with \eqref{cond ab} and \eqref{Tn asymp} yields
the statement of the theorem. \end{proof}

\subsection*{Acknowledgements}
We thank David Mason for mentioning the publications on trimmed sums for slowly varying tails, Péter Kevei for useful comments on an earlier draft of this paper and the referee for his or her valuable comments and careful proofreading which considerably improved the quality of this paper.

\end{document}